\documentclass[12pt]{amsart}
\usepackage{amssymb}
\usepackage{color}
\usepackage{amsmath,epic,curves,amscd}
\usepackage[english]{babel}
\usepackage{graphicx}
\usepackage{comment}
\usepackage{appendix}
\usepackage{mathdots}
\usepackage[all]{xy}
\usepackage{mathtools}
\usepackage{lscape}
\usepackage{stmaryrd}
\usepackage{mathabx}
\pagestyle{plain}

\newtheorem{claim}{}[section]
\newtheorem{theorem}[claim]{Theorem}

\newtheorem{lemma}[claim]{Lemma}
\newtheorem{proposition}[claim]{Proposition}
\newtheorem{corollary}[claim]{Corollary}
\theoremstyle{remark}
\newtheorem{remark}[claim]{Remark}
\theoremstyle{example}

\renewenvironment{proof}{\noindent{\it Proof. \hskip0pt}}
                      {$\square$\par\medskip}
\allowdisplaybreaks
\textwidth 15.5 true cm
\textheight 24.0 true cm
\hoffset = -1.1 true cm
\voffset = -1.7 true cm
\setlength{\unitlength}{1.0 cm}

\usepackage{color}


\begin{document}
\baselineskip 6.0 truemm
\parindent 1.5 true pc

\newcommand\lan{\langle}
\newcommand\ran{\rangle}
\newcommand\tr{\operatorname{Tr}}
\newcommand\ot{\otimes}
\newcommand\ttt{{\text{\sf t}}}
\newcommand\rank{\ {\text{\rm rank of}}\ }
\newcommand\choi{{\rm C}}
\newcommand\choid{{\rm D}}
\newcommand\dual{\star}
\newcommand\flip{\star}
\newcommand\cp{{{\mathbb C}{\mathbb P}}}
\newcommand\ccp{{{\mathbb C}{\mathbb C}{\mathbb P}}}
\newcommand\pos{{\mathcal P}}
\newcommand\tcone{T}
\newcommand\mcone{K}
\newcommand\superpos{{{\mathbb S\mathbb P}}}
\newcommand\blockpos{{{\mathcal B\mathcal P}}}
\newcommand\jc{{\text{\rm JC}}}
\newcommand\dec{{\mathbb D}{\mathbb E}{\mathbb C}}
\newcommand\decmat{{\mathcal D}{\mathcal E}{\mathcal C}}
\newcommand\ppt{{\mathcal P}{\mathcal P}{\mathcal T}}
\newcommand\pptmap{{\mathbb P}{\mathbb P}{\mathbb T}}
\newcommand\join{\vee}
\newcommand\meet{\wedge}
\newcommand\alg{{\text{\rm alg}}}
\newcommand\pr\prime
\newcommand\id{{\text{\rm id}}}
\newcommand\calm{{\mathcal M}}
\newcommand\calb{{\mathcal B}}
\newcommand\cala{{\mathcal A}}
\newcommand\cale{{\mathcal E}}
\newcommand\calf{{\mathcal F}}
\newcommand\calp{{\mathcal P}}
\newcommand\calt{{\mathcal T}}
\newcommand\cals{{\mathcal S}}
\newcommand\call{{\mathcal L}}
\newcommand\calr{{\mathcal R}}
\newcommand\caln{{\mathcal N}}
\newcommand\calcb{{\mathbb{CB}}}
\newcommand\calcp{{\mathbb{CP}}}
\newcommand\calnl{{\mathcal N\mathcal L}}
\newcommand\ad{{\text{\rm Ad}}}
\newcommand\ro{{\boxtimes}}
\newcommand\algmm{{\rm alg}(\calm^\pr,\calm)}
\newcommand\calhs{{\mathcal{HS}}}
\newcommand\rk{{\rm rank}\,}
\newcommand\e{\varepsilon}
\newcommand\conv{{\text{\rm conv}\,}}

\title{Infinite dimensional analogues of Choi matrices}

\author{Kyung Hoon Han, Seung-Hyeok Kye and Erling St\o rmer}
\address{Kyung Hoon Han, Department of Data Science, The University of Suwon, Gyeonggi-do 445-743, Korea}
\email{kyunghoon.han at gmail.com}
\address{Seung-Hyeok Kye, Department of Mathematics and Institute of Mathematics, Seoul National University, Seoul 151-742, Korea}
\email{kye at snu.ac.kr}
\address{Erling St\o rmer, Department of Mathematics, University of Oslo, 0316 Oslo, Norway}
\email{erlings at math.uio.no}
\subjclass{46L10, 46L05, 46L07, 47L25, 81P15}

\keywords{Choi matrices, $k$-positive maps, $k$-superpositive maps, entanglement breaking maps}
\thanks{The first and second author were partially supported by NRF-2020R1A2C1A01004587, Korea.}
\thanks{Parts of the contents in this paper were presented by the third author at
Online Workshop Celebrating the Choi-Jamio\l kowski Isomorphism, hosted at Torun, Poland, in March 2023.}

\begin{abstract}
For a class of linear maps on a von Neumann factor, we associate two objects, bounded operators and
trace class operators, both of which play the roles of Choi matrices. Each of them is positive if and only if the original map
on the factor is completely positive.
They are also useful to characterize positivity of maps as well as
complete positivity. It turns out that such correspondences are
possible for every normal completely bounded map if and only if the
factor is of type I. As an application, we provide criteria for
Schmidt numbers of normal positive functionals in terms of Choi
matrices of $k$-positive maps, in infinite dimensional cases. We
also define the notion of $k$-superpositive maps, which turns out to
be equivalent to the property of $k$-partially entanglement
breaking.
\end{abstract}
\maketitle

\section{Introduction}

Choi matrices of linear maps between matrix algebras, defined by M.-D.\ Choi \cite{choi75-10} in 1975,
have played fundamental roles in current quantum information theory since its beginning.
In fact, Woronowicz \cite{woronowicz} used Choi matrices in 1976,
to show that every positive linear map from $2\times 2$ matrices
into $n\times n$ matrices is decomposable if and only if $n\le 3$, which extends the result
of the third author \cite{stormer} for $n=2$. Woronowicz actually proved
the dual statements that there exists no $2\otimes 3$ entangled state
with positive partial transpose in the current terminologies,
and used a bilinear pairing between linear maps and block matrices through Choi matrices,
to get the required result on positive maps.

Choi's theorem tells us that a map is completely positive if and only if its Choi
matrix is positive (semi-definite). This Choi's theorem has been extended for infinite dimensional cases
by the third author \cite{stormer-dual}, to discuss the extension problem for a class of positive maps
arising from the theory of operator algebras. The second author \cite{eom-kye} also utilized
the bilinear pairing through Choi matrices to show that the dual objects of $k$-positive maps are convex sums
of rank one projections in block matrices whose range vectors correspond to matrices with rank
$\le k$, which is nothing but the notion of Schmidt numbers
\cite{terhal-schmidt} in the current terminologies.
The notion of Schmidt numbers also was defined in \cite{Shirokov_2013} for infinite dimensional case.
The notion of $k$-superpositive maps has been introduced more recently
\cite{{ando-04},{ssz}}, as completely positive maps whose Choi matrices have Schmidt numbers $\le k$.
Then $k$-superpositivity and $k$-positivity are dual objects by the bilinear pairing between linear maps
through Choi matrices. Choi matrices have been considered
for  multi-linear maps
\cite{{han_kye_tri},{han_kye_multi},{kye_multi_dual}},
as well as in various infinite dimensional cases
\cite{{bolanos-quezada},{duvenhage_snyman},{Friedland_2019},{grabowski_kus_marmo},{guddar},{Haapasalo_2020},{holevo_2011},{holevo_2011_a},{li_du},{Magajna_2021},{stormer_2015}}.

The Choi matrix of a linear map $\phi:M_n\to M_n$ is  the block matrix
$\choi_\phi = [\phi(e_{ij})]$ in $M_{n^2}$, with the usual matrix units $\{e_{ij}: i,j = 1,2,\dots,n\}$
in $M_n$.
When one writes $M_{n^2}$ as the tensor product $M_n \otimes M_n$ and
identify $M_n$ with $I \otimes  M_n$, then
$$
\choi_\phi = \sum  e_{ij} \otimes \phi(e_{ij})  = \id \otimes \phi(E_0),
$$
where $E_0$ is the range projection, up to scalar multiplications, of the separating and cyclic vector
$x_0 = \sum e_i \otimes e_i$ for $M_n$, where $\{e_1, e_2,\dots, e_n\}$  is the standard basis for $\mathbb C^n$.

In the present paper, we shall study generalizations of
$\choi_\phi$ to linear maps of von Neumann algebras
which are factors $\calm$ acting on a Hilbert space $H$,
and which have a separating and cyclic vector $x_0$.
Note that this is the case for the GNS representation for a faithful normal state.
Let $E_0$ be, as above, the projection onto the linear span of $x_0$.
Then the algebra $\algmm$ generated by $\calm$ and its commutant $\calm^\pr$
is weakly dense in $\calb(H)$, which is $*$-isomorphic to the von Neumann algebra tensor product
$\calm^\pr\overline\otimes\calm$
of  $\calm^\pr$  and  $\calm$ only when  $\calm$  is of type I.
We must then change the map ${\id}\otimes \phi$ above for $\phi:\calm\to\calm$ to
$\tilde\phi:\algmm\to\algmm$  by
$$
\tilde\phi(A^\pr B) = A^\pr\phi(B) =\phi(B)A^\pr,\qquad  A^\pr\in \calm^\pr,\ B \in \calm.
$$
When $\tilde\phi$ extends to a normal map, that is,
a map which is continuous with respect to the $\sigma$-weak operator topology on $\calb(H)$, we define
the bounded operator $\choi_\phi=\tilde\phi(E_0)\in\calb(H)$ and
the trace class operator $\choid_\phi=\tilde\phi_*(E_0)\in\calt(H)=\calb(H)_*$,
with the rank one projection $E_0$ onto $x_0$, where $\tilde\phi_*$ denotes the predual map of $\tilde\phi$.

We will see in Section \ref{sec_a} that both $\phi\mapsto \choi_\phi$ and
$\phi\mapsto \choid_\phi$ are injective with dense ranges, and provide
a general framework to get an equivalent condition for entanglement breaking property of $\phi$ in terms of $\choid_\phi$.
We show in Section \ref{sec_b} that both $\choi_\phi$ and $\choid_\phi$
enjoy the Choi's correspondence between positivity of them and
complete positivity of $\phi$ with Kraus decompositions, whenever $\tilde\phi$
extends to a normal map on the whole algebra $\calb(H)$.
We also characterize positivity of the map $\phi$ in terms of  $\choid_\phi$,
which are infinite dimensional analogues of Jamio\l kowski's theorem \cite{jam_72}.

We note that if $\tilde\phi$ extends to a normal map on $\calb(H)$ then $\phi$ itself is a normal completely bounded map.
We will see in Section \ref{sec_c} that the converse holds if and only if $\calm$ is of type ${\rm I}$,
and concentrate on the type ${\rm I}_\infty$ factor $\calm=\calb(K)$ acting on $H=\calhs(K)$ of Hilbert-Schmidt
operators by left multiplications.
In case of matrix algebras, all the possible variants of Choi matrices were found in \cite{{han_kye_Choi_mat},{kye_Choi_matrix}},
to retain the Choi's correspondences between complete positivity of maps and positivity of Choi matrices.
Choosing a variant among them turns out to amount to choosing a separating and cyclic vector in our approach.
See Remark \ref{rem-finite}.

When $\calm=\calb(K)$ is of type ${\rm I}$, we consider  in
Section 5 the ranks of Hilbert-Schmidt operators on
which $\calm$ acts by left multiplications. This leads us to get
infinite dimensional analogues for characterization of
$k$-positivity and Schmidt numbers in terms of Choi matrices. In
Section 6, we define the notion of $k$-superpositive maps as the
limit of a sequence of sums of the maps $X\mapsto V^*XV$ with $\rk
V\le k$ in terms of $\choid_\phi$, and show that they exhaust all
$k$-partially entanglement breaking maps.

In this note, we use the bilinear pairing between the space $\calb(H)$ of bounded operators and the space $\calt(H)$
of trace class operators, defined by
$\lan A,B\ran=\tr(A B)$ for $A\in \calb(H),\ B\in\calt(H)$,
where $\tr$ is defined by $\tr(X)=\sum_{i=1}^\infty (Xe_i|e_i)$ for an orthonormal basis $\{e_i\}$ of $H$,
and $(\, \cdot\, |\,\cdot\, )$ denotes the inner product which is linear in the first variable and
conjugate-linear in the second variable.
Then the predual map $\Phi_*:\calt(H)\to\calt(H)$ of a normal map $\Phi:\calb(H)\to\calb(H)$
is defined by
$\lan A,\Phi_*(B)\ran=\lan\Phi(A),B\ran$ for $A\in \calb(H)$ and $B\in\calt(H)$.
For given $x,y\in H$, we denote  by $x\ro y$ the rank one operator given by $x\ro y(z)=(z|y)x$.
By the relation $\lan A, x\ro y\ran= (Ax|y)$ for $A\in\calb (H)$, we see that the trace class operator
$x\ro y\in\calt(H)$ corresponds to the normal functional $\omega_{x,y}\in\calb(H)_*$ given by
$\lan A,\omega_{x,y}\ran=(Ax|y)$.

The authors are grateful to the referee for informing them of the paper \cite{duvenhage_snyman}.

\section{Choi matrices as bounded operators and trace class operators}\label{sec_a}

We recall that any von Neumann algebra $\calm$ acting on
a separable Hilbert space has a representation with a separating and cyclic vector,
and $\calm$ and its commutant $\calm^\pr$
are anti-$*$-isomorphic to each other in this representation \cite{tomita_takesaki}.
Throughout this paper, we suppose that $\calm$ is a von Neumann factor acting on a separable Hilbert space $H$
with a fixed separating and cyclic unit vector $x_0$.
For a given linear map $\phi:\calm\to\calm$, the map
\begin{equation}\label{tildephi}
\tilde\phi: \textstyle\sum_i A_i^\pr B_i\mapsto \textstyle\sum_i  A_i^\pr\phi(B_i),
     \qquad A_i^\pr\in \calm^\pr,\ B_i\in \calm
\end{equation}
is a well-defined linear map on  $\algmm$, because $\algmm$ is $*$-isomorphic to
the algebraic tensor product $\calm^\pr\ot\calm$ \cite{MvN}. See also \cite[Theorem 5.5.4]{KRI}.
We also note that $\algmm$ is weakly dense in $\calb(H)$ by the double commutant theorem of von Neumann.

\begin{proposition}\label{bimodule}
Suppose that $\Phi:\algmm\to\algmm$ is a linear map. Then
$\Phi=\tilde\phi$ for a linear map $\phi:\calm\to\calm$ if and only
if $\Phi$ is an $\calm^\pr$-bimodule map.
\end{proposition}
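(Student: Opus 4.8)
The plan is to prove both implications directly from the definitions, letting the commutation relation between $\calm$ and $\calm^\pr$ do most of the work, with the double commutant theorem supplying the one genuinely substantive point.

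First I would treat the forward direction. Assuming $\Phi=\tilde\phi$, take arbitrary $A^\pr,B^\pr\in\calm^\pr$ and an arbitrary $X=\sum_i C_i^\pr D_i\in\algmm$ with $C_i^\pr\in\calm^\pr$ and $D_i\in\calm$. Since each $D_i\in\calm$ commutes with $B^\pr\in\calm^\pr$, I can rewrite $A^\pr X B^\pr=\sum_i (A^\pr C_i^\pr B^\pr)D_i$, where each coefficient $A^\pr C_i^\pr B^\pr$ again lies in the algebra $\calm^\pr$. Applying the definition \eqref{tildephi} of $\tilde\phi$ gives $\tilde\phi(A^\pr XB^\pr)=\sum_i(A^\pr C_i^\pr B^\pr)\phi(D_i)$, and since $\phi(D_i)\in\calm$ commutes with $B^\pr$, this equals $A^\pr\big(\sum_i C_i^\pr\phi(D_i)\big)B^\pr=A^\pr\tilde\phi(X)B^\pr$. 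Hence $\tilde\phi$ is an $\calm^\pr$-bimodule map; this is routine once one keeps track of which factor commutes past which.

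For the converse, suppose $\Phi$ is an $\calm^\pr$-bimodule map. The natural candidate for $\phi$ is the restriction $\phi:=\Phi|_\calm$, viewing $B\in\calm$ as $1\cdot B\in\algmm$. The step I expect to be the crux is verifying that this restriction actually maps $\calm$ into $\calm$, i.e.\ that $\phi$ is a genuine map $\calm\to\calm$. To see this I would fix $B\in\calm$ and an arbitrary $A^\pr\in\calm^\pr$, and use $A^\pr B=BA^\pr$ in $\algmm$. Applying the bimodule hypothesis to both sides yields $A^\pr\Phi(B)=\Phi(A^\pr B)=\Phi(BA^\pr)=\Phi(B)A^\pr$. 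Thus $\Phi(B)$ commutes with every element of $\calm^\pr$, so $\Phi(B)\in(\calm^\pr)^\pr=\calm$ by the double commutant theorem. This confirms that $\phi=\Phi|_\calm$ is well defined into $\calm$.

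Finally I would check $\Phi=\tilde\phi$ on all of $\algmm$. Writing $X=\sum_i A_i^\pr B_i$ and using the left-module part of the bimodule property (the case $B^\pr=1$), I get $\Phi(X)=\sum_i\Phi(A_i^\pr B_i)=\sum_i A_i^\pr\Phi(B_i)=\sum_i A_i^\pr\phi(B_i)=\tilde\phi(X)$, the last equality being the definition of $\tilde\phi$. Consistency of this computation across different representations of $X$ is automatic, since $\tilde\phi$ is already known to be well defined on $\algmm\cong\calm^\pr\ot\calm$. This settles both directions.
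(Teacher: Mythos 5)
Your proof is correct and follows essentially the same route as the paper's: the forward direction by commuting elements of $\calm^\pr$ past elements of $\calm$ before and after applying the definition of $\tilde\phi$, and the converse by showing $\Phi(B)$ commutes with $\calm^\pr$ and invoking the double commutant theorem, then checking $\Phi=\tilde\phi$ by linearity. The only difference is cosmetic: you carry general sums $\sum_i C_i^\pr D_i$ where the paper works with a single product term and appeals to linearity implicitly.
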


\begin{proof}
For every $A^\pr, B^\pr, D^\pr\in\calm^\pr$ and $C\in\calm$, we have
the identity
$$
\tilde\phi(A^\pr B^\pr C D^\pr)=\tilde\phi(A^\pr B^\pr D^\pr C)
=A^\pr B^\pr D^\pr\phi(C)=A^\pr B^\pr\phi(C) D^\pr= A^\pr
\tilde\phi( B^\pr C) D^\pr,
$$
and so we see that $\tilde\phi$ is an $\calm^\pr$-bimodule map. For
the converse, we suppose that $\Phi$ is an $\calm^\pr$-bimodule map.
If $A\in\calm$ and $E^\pr\in\calm^\pr$ then we have
$$
E^\pr \Phi(A)= \Phi(E^\pr A)=\Phi(A E^\pr)=\Phi(A) E^\pr,
$$
and so we see that $\Phi(A)\in\calm$ by the double commutant
theorem. Therefore, we see that the restriction $\phi=\Phi|_\calm$
maps $\calm$ into $\calm$. Now, we have $\tilde\phi(E^\pr A)=E^\pr
\phi(A) =E^\pr \Phi(A)=\Phi(E^\pr A)$, and so we conclude that
$\Phi=\tilde\phi$ on $\algmm$.
\end{proof}

With the $*$-isomorphism from $\algmm$ onto $\calm^\pr\ot\calm$, we
have the commuting diagram:
\begin{equation}\label{commdia}
\begin{CD}
\algmm @         >\tilde\phi>>      \algmm \\
      @V \simeq VV                   @VV \simeq V \\
\calm^\pr\ot\calm  @         >\id\ot\phi>>       \calm^\pr\ot\calm
\end{CD}
\end{equation}
We also note that the two $*$-algebras $\algmm$ and $\calm^\pr\ot\calm$
act on $H$ and $H\ot H$, respectively, and so we may consider the
$C^*$-algebra $C^*(\calm^\pr,\calm)\subset \calb(H)$ together with
the $C^*$-algebra  $\calm^\pr\ot_{\rm min}\calm\subset \calb(H\ot
H)$ generated by them. It turns out that $C^*(\calm^\pr,\calm)$ is $*$-isomorphic to the binormal tensor
product $\calm^\pr\ot_{\rm bin}\calm$ \cite{effros_lance}, and the isomorphism in
(\ref{commdia}) extends to a surjective $*$-homomorphism $\pi$ from
$\calm^\pr\ot_{\rm bin}\calm$ onto $\calm^\pr\ot_{\rm min}\calm$.
Note that $\pi$ is an isometry if and only if $\calm$ is injective.
See \cite{effros_lance} and \cite[Proposition 2.3.6]{blackadar}.
With the von Neumann algebra $\calm^\pr\overline\ot\calm$ acting on $H\ot H$,
we summarize our discussion in the following diagram:
$$
\xymatrix{
{\rm alg}(\calm^\pr,\calm) \ar[d]^\simeq \ar@{}|-*[@]{\subset}[r]
  & \calm^\pr\ot_{\rm bin}\calm = C^*(\calm^\pr,\calm) \ar[d]^\pi \ar@{}|-*[@]{\subset}[r] & \calb(H)\\
\calm^\pr\ot \calm \ar@{}|-*[@]{\subset}[r]
  & \calm^\pr\ot_{\rm min}\calm \ar@{}|-*[@]{\subset}[r] &  \calm^\pr\overline\ot\calm
}
$$

We note that if $\phi:\calm\to\calm$ is completely bounded then
$\id\ot\phi$ from $\calm^\pr\ot_{\rm min} \calm$ into itself is a bounded map.
The converse is also true, because every infinite dimensional factor
has a $*$-subalgebra isomorphic to a matrix algebra with arbitrary
size. See \cite[Proposition III.1.5.14]{blackadar}.
We also note that $\|\id_{\calm'}\ot\phi\|= \|\phi\|_{\rm cb}$.
When $\tilde\phi$ extends to a normal map on $\calb(H)$, we define $\choi_\phi$
and $\choid_\phi$ by
\begin{equation}\label{deff}
\choi_\phi=\tilde\phi(E_0)\in \calb(H),\qquad \choid_\phi=\tilde\phi_*(E_0)\in\calt(H),
\end{equation}
with the one dimensional projections $E_0=x_0\ro x_0$ onto the separating and cyclic unit vector $x_0$.
By the identification $x_0\ro x_0=\omega_{x_0,x_0}$ in $\calt(H)=\calb(H)_*$, we have
$\choid_\phi=\omega_{x_0,x_0}\circ\tilde\phi$ as a normal functional on $\calb(H)$.
In fact, we have $\lan X, \choid_\phi\ran=\lan \tilde\phi(X), E_0\ran=\omega_{x_0,x_0}(\tilde\phi(X))$ for $X\in\calb(H)$.

We denote by $\calcb^\sigma(\calm)$ the space of all completely bounded linear maps $\phi:\calm\to\calm$
which are normal. We also denote by $\calcb^\sigma_\e(\calm)$ the space of maps $\phi:\calm\to\calm$
such that $\tilde\phi$ extends to a normal map on $\calb(H)$. We note that $\calcb^\sigma_\e(\calm)\subset \calcb^\sigma(\calm)$
by \cite[Proposition III.1.5.14]{blackadar} again.
Elementary operators $\phi_{M,N}$ given by
$$
\phi_{M,N}(X)=MXN,\qquad M,N\in\calm
$$
are typical examples of maps belonging to $\calcb^\sigma_\e(\calm)$.
Especially, the map $\ad_V$ is defined by $\ad_V(A)=V^*AV$ for $A\in\calm$.
We have the identity
\begin{equation}\label{choi_ele}
\choi_{\phi_{M,N}}=ME_0N=Mx_0\ro N^*x_0\in\calb(H).
\end{equation}
By the identity
$$
\lan A^\pr B, \choid_{\phi_{M,N}} \ran
=(A^\pr\phi_{M,N}(B)x_0|x_0)
=(MA^\pr BN x_0|x_0)=(A^\pr BN x_0|M^*x_0),
$$
for $A^\pr\in\calm^\pr$ and $B\in\calm$, we have
\begin{equation}\label{choid_ele}
\choid_{\phi_{M,N}}=\omega_{Nx_0,M^*x_0} =Nx_0\ro M^*x_0 =NE_0M\in\calt(H).
\end{equation}
These identities (\ref{choi_ele}) and (\ref{choid_ele}) have been used as the definitions in \cite{stormer_2015}
for elementary operators, which are the main motivations for our definitions in (\ref{deff}).

\begin{theorem}\label{inj}
Suppose that $\calm$ is a factor acting on a separable Hilbert space $H$
with a separating and cyclic vector $x_0$. Then both
correspondences $\phi\mapsto \choi_\phi\in\calb(H)$ and
$\phi\mapsto\choid_\phi\in\calt(H)$ are injective on the space
$\calcb^\sigma_\e(\calm)$ with the weak dense and norm dense ranges, respectively.
\end{theorem}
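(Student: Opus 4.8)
The plan is to test both correspondences against the elementary operators $\phi_{M,N}$, whose images were computed in (\ref{choi_ele}) and (\ref{choid_ele}), and to exploit repeatedly the symmetry of the hypothesis on $x_0$. Since $x_0$ is cyclic and separating for $\calm$, it is simultaneously separating and cyclic for $\calm^\pr$; in particular both $\calm x_0$ and $\calm^\pr x_0$ are dense in $H$, and $x_0$ is separating for $\calm$. Throughout I use the pairing identity $\lan A,x\ro y\ran=(Ax|y)$ noted in the introduction, together with the standard dualities $\calt(H)^*=\calb(H)$ and $\calb(H)=\calt(H)^*$. Because $\phi\mapsto\choi_\phi$ and $\phi\mapsto\choid_\phi$ are linear and $\calcb^\sigma_\e(\calm)$ contains every $\phi_{M,N}$, it will suffice to test density against these elementary images alone.

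Both density assertions I would obtain by duality, the ranges being linear subspaces. For the norm density of $\{\choid_\phi\}$ in $\calt(H)$ it is enough that any $A\in\calt(H)^*=\calb(H)$ annihilating the range be $0$; by (\ref{choid_ele}) the condition $\lan A,\choid_{\phi_{M,N}}\ran=0$ reads $(ANx_0\mid M^*x_0)=0$ for all $M,N\in\calm$, and density of $\calm x_0$ (as $Nx_0$ and $M^*x_0$ both range over it) forces $A=0$. Symmetrically, for the weak-$*$ density of $\{\choi_\phi\}$ in $\calb(H)$ it is enough that any $B\in\calt(H)$ with $\lan\choi_\phi,B\ran=0$ for all $\phi$ vanish; by (\ref{choi_ele}) this reads $(BMx_0\mid N^*x_0)=0$ for all $M,N$, again forcing $B=0$. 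Weak-$*$ density is the stronger statement and entails density in the weak operator topology as well.

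The injectivity of $\phi\mapsto\choid_\phi$ is the easiest point and rests on the separating property. If $\choid_\phi=0$, then $\lan X,\choid_\phi\ran=(\tilde\phi(X)x_0\mid x_0)=0$ for every $X\in\calb(H)$; choosing $X=A^\pr B$ with $A^\pr\in\calm^\pr$ and $B\in\calm$ and using $\tilde\phi(A^\pr B)=A^\pr\phi(B)$ gives $(\phi(B)x_0\mid (A^\pr)^*x_0)=0$, whence $\phi(B)x_0=0$ because $\calm^\pr x_0$ is dense. As $\phi(B)\in\calm$ and $x_0$ separates $\calm$, we get $\phi(B)=0$ for all $B$, i.e. $\phi=0$.

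The injectivity of $\phi\mapsto\choi_\phi$ is the one place where I expect genuine care to be needed. Starting from $\choi_\phi=\tilde\phi(E_0)=0$, I would first upgrade the $\calm^\pr$-bimodule identity of Proposition \ref{bimodule} from $\algmm$ to all of $\calb(H)$: for fixed $A^\pr,B^\pr\in\calm^\pr$ the two normal maps $X\mapsto\tilde\phi(A^\pr X B^\pr)$ and $X\mapsto A^\pr\tilde\phi(X)B^\pr$ agree on the $\sigma$-weakly dense algebra $\algmm$, hence everywhere. This yields $\tilde\phi(A^\pr E_0 B^\pr)=A^\pr\tilde\phi(E_0)B^\pr=0$, so $\tilde\phi$ annihilates every $A^\pr x_0\ro (B^\pr)^*x_0$; these span a $\sigma$-weakly dense subspace by density of $\calm^\pr x_0$, and since $\tilde\phi$ is normal its kernel is $\sigma$-weakly closed, forcing $\tilde\phi=0$ and therefore $\phi=\tilde\phi|_\calm=0$. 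The delicate points are legitimately applying the bimodule relation to the rank-one projection $E_0$, which in general does not lie in $\algmm$ (resolved by normality together with the $\sigma$-weak density of $\algmm$), and invoking $\sigma$-weak continuity to pass from a dense subspace to the whole algebra; the density statements and the $\choid$-injectivity are otherwise routine consequences of the cyclicity and separation of $x_0$.
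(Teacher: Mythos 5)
Your proof is correct and follows essentially the same route as the paper: density of both ranges comes from the elementary operators $\phi_{M,N}$, whose images $Mx_0\ro N^*x_0$ and $Nx_0\ro M^*x_0$ span dense subspaces by cyclicity of $x_0$ for $\calm$, and injectivity comes from the $\calm^\pr$-bimodule property of $\tilde\phi$ combined with cyclicity of $x_0$ for $\calm^\pr$. Your explicit justification for applying the bimodule identity at $E_0\notin\algmm$ (two normal maps agreeing on the $\sigma$-weakly dense $\algmm$) fills in a step the paper leaves implicit, and your use of the separating property of $x_0$ for the $\choid$-injectivity is a harmless variant of the paper's double application of cyclicity of $x_0$ for $\calm^\pr$.
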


\begin{proof}
For every $V,{W}\in\calm$, we note that the rank one {operator $V x_0\ro Wx_0$}
belongs to the ranges of both correspondences. Because $x_0$ is
cyclic for $\calm$, we see that the $\choi_\phi$'s and $\choid_\phi$'s make a weakly dense subspace of $\calb(H)$,
and a norm dense subspace of $\calt(H)$, respectively.
By the $\calm^\pr$-bimodule property
$$
\tilde\phi(A^{\pr *}E_0B^\pr)= A^{\pr *}\tilde\phi(E_0)B^\pr= A^{\pr *}\choi_\phi B^\pr,\qquad A^\pr, B^\pr\in\calm^\pr,
$$
we see that $\choi_\phi=0$ implies $\tilde\phi=0$,
because $x_0$ is cyclic for $\calm^\pr$. Therefore, we have $\phi=0$. We also have
$$
(\phi(A)B^\pr x_0|C^\pr x_0) =(C^{\pr *}\phi(A)B^\pr x_0| x_0)
=(\tilde\phi(C^{\pr *}AB^\pr) x_0|x_0)= \lan C^{\pr *}AB^\pr, \choid_\phi \ran,
$$
for every $A\in\calm$ and $B^\pr,C^\pr\in\calm^\pr$. Since $x_0$
is cyclic for $\calm^\pr$, we see that $\choid_\phi=0$ implies
$\phi=0$.
\end{proof}

\begin{theorem}\label{entbreak}
Suppose that $\calm$ is a factor acting on a separable Hilbert space $H$
with a separating and cyclic vector $x_0$, and a closed subset $S$ of $\calt(H)$ has the property that
$V^\pr\varrho V^{\pr *}\in S$ for every $V^\pr\in\calm^\pr$ and
$\varrho\in S$. For a normal completely bounded map $\phi\in \calcb^\sigma_\e(\calm)$,
the following are equivalent:
\begin{enumerate}
\item[{\rm (i)}]
$\choid_\phi\in S$;
\item[{\rm (ii)}]
$\tilde\phi_*(\omega_{x,x})\in S$ for every $x\in H$;
\item[{\rm (iii)}]
$\tilde\phi_*$ sends $\calt(H)^+$ into $S$.
\end{enumerate}
\end{theorem}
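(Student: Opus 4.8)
The plan is to base everything on the intertwining identity
\[
\tilde\phi_*(V^\pr \varrho V^{\pr*}) = V^\pr\,\tilde\phi_*(\varrho)\, V^{\pr*},
\qquad V^\pr\in\calm^\pr,\ \varrho\in\calt(H),
\]
which I would derive from the $\calm^\pr$-bimodule property of $\tilde\phi$ (Proposition \ref{bimodule}) together with cyclicity of the trace: for $A\in\calb(H)$ one moves the $V^\pr$'s through the trace and the predual to get $\lan A, V^\pr\tilde\phi_*(\varrho)V^{\pr*}\ran = \lan\tilde\phi(V^{\pr*}AV^\pr),\varrho\ran$, applies $\tilde\phi(V^{\pr*}AV^\pr)=V^{\pr*}\tilde\phi(A)V^\pr$, and reverses the steps. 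A point to check is that the bimodule identity, proved on $\algmm$, survives for the normal extension of $\tilde\phi$ to $\calb(H)$; this holds because $\algmm$ is $\sigma$-weakly dense and both sides are $\sigma$-weakly continuous in $A$. Granting the identity, two implications are immediate: (iii)$\Rightarrow$(ii) because $\omega_{x,x}=x\ro x\in\calt(H)^+$, and (ii)$\Rightarrow$(i) because $E_0=\omega_{x_0,x_0}$ with $x_0\in H$. So the real content lies in (i)$\Rightarrow$(ii) and (ii)$\Rightarrow$(iii).

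For (i)$\Rightarrow$(ii), I would first handle vectors of the form $x=V^\pr x_0$ with $V^\pr\in\calm^\pr$. Since $\omega_{V^\pr x_0,V^\pr x_0}=V^\pr E_0 V^{\pr*}$, the identity gives $\tilde\phi_*(\omega_{V^\pr x_0,V^\pr x_0})=V^\pr\choid_\phi V^{\pr*}$, which lies in $S$ by (i) and the invariance hypothesis on $S$. To reach an arbitrary $x\in H$, I would use that $x_0$ is cyclic for $\calm^\pr$ (being separating for $\calm$), choose $V^\pr_n x_0\to x$, and pass to the limit: the map $y\mapsto y\ro y$ is locally Lipschitz from $H$ into $\calt(H)$, so $\omega_{V^\pr_n x_0,V^\pr_n x_0}\to\omega_{x,x}$ in trace norm; since $\tilde\phi_*$ is trace-norm bounded (as the predual of the bounded map $\tilde\phi$) and $S$ is closed, it follows that $\tilde\phi_*(\omega_{x,x})\in S$.

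For (ii)$\Rightarrow$(iii) I would take a spectral decomposition $\varrho=\sum_n\omega_{y_n,y_n}$ of a positive $\varrho\in\calt(H)^+$, convergent in trace norm, apply $\tilde\phi_*$ termwise, and present $\tilde\phi_*(\varrho)$ as a trace-norm limit of partial sums $\sum_{n\le N}\tilde\phi_*(\omega_{y_n,y_n})$ of elements of $S$. Here I expect the genuine obstacle: to conclude the limit lies in $S$ one needs $S$ to be stable under these sums. The invariance hypothesis already forces $S$ to be a cone, since $cI\in\calm^\pr$ gives $|c|^2\varrho\in S$, but the summation step requires in addition that $S$ be closed under addition, i.e.\ convex. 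I would therefore carry out this implication under the assumption that $S$ is a closed convex cone — precisely the case for the sets relevant to us (the separable trace class operators, and those of Schmidt number $\le k$). Under that assumption the partial sums lie in $S$ and closedness finishes the argument. The two delicate points throughout are thus the transfer of the bimodule identity to the predual level and this convexity requirement in the last step.
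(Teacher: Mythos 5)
Your proof is correct and follows essentially the same route as the paper's: (i)$\Rightarrow$(ii) via the $\calm^\pr$-bimodule property of $\tilde\phi_*$ applied to $\omega_{V^\pr x_0,V^\pr x_0}=V^\pr E_0V^{\pr *}$ together with cyclicity of $x_0$ for $\calm^\pr$, and (ii)$\Rightarrow$(iii) via norm continuity of $\tilde\phi_*$ and the approximation of positive trace class operators by finite sums of $\omega_{x,x}$. The convexity issue you flag in the last step is genuine but not specific to your argument: the paper's proof uses exactly the same partial-sum approximation and therefore also tacitly requires $S$ to be closed under addition, a hypothesis missing from the statement but satisfied by every $S$ to which the theorem is actually applied (the separable states and the cones $\cals_k$).
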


\begin{proof}
We note that $\tilde\phi_*$ enjoys the $\calm^\pr$-bimodule map property by Proposition \ref{bimodule}, and so
we see that (i) implies that $\tilde\phi_*(\omega_{V^\pr x_0,V^\pr x_0})=V^\pr\tilde\phi_*(E_0)V^{\pr *}\in S$ for $V^\pr\in\calm^\pr$.
Since $x_0$ is cyclic for $\calm^\pr$, we see that (i) implies (ii). We have the direction (ii) $\Longrightarrow$ (iii),
because $\tilde\phi_*$ is continuous and every positive $\varrho\in \calt(H)^+$ is the norm limit of sums of $\omega_{x,x}$.
\end{proof}

If $S$ is the set of all separable states (see Section 5) then the condition (iii) of Theorem \ref{entbreak} tells us that
$\tilde\phi_*$ sends every normal state to a separable state, that is, $\phi$ is entanglement breaking \cite{hsrus}.
Therefore, Theorem \ref{entbreak} tells us that $\phi\in\calcb^\sigma_\e(\calm)$ is entanglement breaking if and only if
$\choid_\phi$ is separable, as in cases of matrix algebras \cite{{hsrus},{ssz}}.

\section{Positivity and complete positivity}\label{sec_b}

Several authors \cite{bolanos-quezada, Friedland_2019, grabowski_kus_marmo, holevo_2011, holevo_2011_a, li_du, stormer-dual} associated the objects,
which may be considered as Choi matrices, for linear maps in various infinite
dimensional situations, and gave correspondences between complete
positivity of maps and positivity of the objects, to extend Choi's theorem for matrices \cite{choi75-10}. Especially, we
note that the functional $\omega_{x_0,x_0}\circ\tilde\phi$, which is
just $\choid_\phi$ in our approach, has been also considered in \cite{duvenhage_snyman, Haapasalo_2020} for
the correspondences between channels and bi-partite states for von Neumann algebras.

Now, we proceed to show that $\phi\in\calcb^\sigma_\e(\calm)$ is
completely positive if and only if $\choi_\phi$ is positive if and
only if $\choid_\phi$ is positive.
The following was shown by the third author \cite{stormer_2015}
for elementary operators of the form $X\mapsto \sum_{i=1}^n M_i XN_i $ on injective factors.

\begin{theorem}\label{thm}
Suppose that $\calm$ is a factor acting on a separable Hilbert space
$H$ with a separating and cyclic vector $x_0$. For a linear map
$\phi\in\calcb^\sigma_\e(\calm)$, the following are equivalent:
\begin{enumerate}
\item[{\rm (i)}]
$\phi:\calm\to\calm$ is a completely positive linear map;
\item[{\rm (ii)}]
$\tilde\phi:\calb(H)  \to \calb(H)$ is a completely positive linear map;
\item[{\rm (iii)}]
$\tilde\phi:\calb(H)  \to \calb(H)$ is a positive linear map;
\item[{\rm (iv)}]
$\choi_\phi$ is a positive operator in $\calb(H)$;
\item[{\rm (v)}]
$\choid_\phi$ is a positive operator in $\calt(H)$;
\item[{\rm (vi)}]
$\phi=\sum_{i=1}^\infty\ad_{V_i}$ in the point--weak topology
with $V_i\in\calm$ and $\sum_{i=1}^\infty{V_i^*V_i}\le \|\phi\|_{\rm cb}I$.
\end{enumerate}
\end{theorem}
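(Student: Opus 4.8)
The plan is to establish all six conditions by closing a cycle, separating a handful of substantive implications from routine ones. The cheap implications I would dispatch first are: $(ii)\Rightarrow(iii)$ (complete positivity implies positivity); $(iii)\Rightarrow(iv)$, since $\choi_\phi=\tilde\phi(E_0)$ and $E_0\ge0$; $(ii)\Rightarrow(i)$, because $\phi=\tilde\phi|_\calm$ and the restriction of a completely positive map to the subalgebra $\calm\subset\calb(H)$ is completely positive; $(vi)\Rightarrow(ii)$, because each $\ad_{V_i}$ is completely positive on $\calb(H)$, finite sums are completely positive, and with $\sum V_i^*V_i\le\|\phi\|_{\rm cb}I$ the series $\sum\ad_{V_i}$ converges to a normal completely positive map on $\calb(H)$ which, since the $V_i\in\calm$ commute with $\calm^\pr$, is an $\calm^\pr$-bimodule map agreeing with $\tilde\phi$ on $\algmm$ and hence equal to $\tilde\phi$ by normality; and $(vi)\Rightarrow(v)$, since then $\choid_\phi=\sum V_ix_0\ro V_ix_0\ge0$, the series converging in trace norm because $\sum\|V_ix_0\|^2\le\|\phi\|_{\rm cb}\|x_0\|^2<\infty$. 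The real content lies in $(i)\Rightarrow(ii)$, $(iv)\Rightarrow(ii)$, $(ii)\Rightarrow(vi)$ and $(v)\Rightarrow(i)$, which together close the cycle.

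For $(i)\Rightarrow(ii)$ and $(iv)\Rightarrow(ii)$ I would verify the matricial positivity $[\tilde\phi(a_i^*a_j)]\ge0$ first on a weakly dense $*$-subalgebra and then propagate it to all of $\calb(H)$ using normality of $\tilde\phi$ together with the Kaplansky density theorem: choose bounded nets in the subalgebra converging $*$-strongly to given $b_i\in\calb(H)$, note that on bounded sets multiplication is jointly strongly continuous while $\tilde\phi$ is $\sigma$-weakly continuous, so that positivity of the quadratic forms passes to the limit. For $(i)\Rightarrow(ii)$ the subalgebra is $\algmm$ itself: writing $z_k=\sum_m A^\pr_{km}B_{km}$ and using $\tilde\phi(A^\pr B)=A^\pr\phi(B)$ with the commutation of $\calm^\pr$ and $\calm$, the form $\sum_{ij}(\tilde\phi(z_i^*z_j)\xi_j|\xi_i)$ collapses exactly to $\sum_{I,J}(\phi(B_I^*B_J)\zeta_J|\zeta_I)$ with $B_I=B_{im}$ and $\zeta_I=A^\pr_{im}\xi_i$, which is nonnegative precisely because $\phi$ is completely positive. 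For $(iv)\Rightarrow(ii)$ the subalgebra is $\calm^\pr E_0\calm^\pr$; taking $a_k=A_k^\pr E_0 B_k^\pr$ and using $E_0 D E_0=(Dx_0|x_0)E_0$ together with the bimodule identity yields $\tilde\phi(a_i^*a_j)=(\eta_j|\eta_i)\,B_i^{\pr*}\choi_\phi B_j^\pr$ with $\eta_k=A_k^\pr x_0$, so the form becomes $\sum_{ij}(\eta_j|\eta_i)(\choi_\phi w_j|w_i)$ with $w_j=B_j^\pr\xi_j$; this is the all-ones sum of the Hadamard product of two positive semidefinite matrices, hence nonnegative by the Schur product theorem as soon as $\choi_\phi\ge0$.

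For $(ii)\Rightarrow(vi)$ I would take the minimal Stinespring dilation $\tilde\phi(X)=V^*\pi(X)V$ of the normal completely positive map $\tilde\phi$; as $\pi$ is a normal representation of the type I factor $\calb(H)$, I may assume $\pi(X)=X\ot 1$ on $H\ot\mathcal K$, so that $\tilde\phi=\sum\ad_{W_i}$ with $W_i\in\calb(H)$ and $\sum W_i^*W_i=\tilde\phi(I)\le\|\phi\|_{\rm cb}I$. The bimodule identity $\tilde\phi(A^\pr X)=A^\pr\tilde\phi(X)$ and minimality force $V^*\pi(A^\pr)=A^\pr V^*$ on the dense set $\pi(\calb(H))VH$, whence $(A^\pr\ot1)V=VA^\pr$ and therefore $A^\pr W_i=W_iA^\pr$ for every $A^\pr\in\calm^\pr$; thus $W_i\in\calm^{\pr\pr}=\calm$ and $\phi=\tilde\phi|_\calm=\sum\ad_{W_i}$ is the decomposition required in $(vi)$. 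Finally, for $(v)\Rightarrow(i)$ I would use the pairing identity from the proof of Theorem \ref{inj}: for $A_k\in\calm$ and $B_k^\pr\in\calm^\pr$, setting $Z=\sum_jA_jB_j^\pr\in\algmm$ one computes $\sum_{ij}(\phi(A_i^*A_j)B_j^\pr x_0|B_i^\pr x_0)=\lan Z^*Z,\choid_\phi\ran=\tr(Z^*Z\,\choid_\phi)\ge0$ whenever $\choid_\phi\ge0$; since $\calm^\pr x_0$ is dense this gives $[\phi(A_i^*A_j)]\ge0$, i.e. complete positivity of $\phi$.

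I expect the main obstacle to be $(ii)\Rightarrow(vi)$, precisely the point of forcing the Kraus operators into $\calm$ rather than merely into $\calb(H)$. This is exactly where the standing hypothesis $\phi\in\calcb^\sigma_\e(\calm)$, that $\tilde\phi$ extends to a normal map on $\calb(H)$, is indispensable: it makes the ambient dilation live over $H$ and lets the $\calm^\pr$-bimodule property be converted, through the intertwining relation $(A^\pr\ot1)V=VA^\pr$, into $W_i\in\calm^{\pr\pr}=\calm$. A secondary technical point, recurring in $(i)\Rightarrow(ii)$ and $(iv)\Rightarrow(ii)$, is the passage from a weakly dense $*$-subalgebra to all of $\calb(H)$, which must be carried out with bounded $*$-strongly convergent nets so that both the products $a_i^*a_j$ and their images under the normal map $\tilde\phi$ converge in the appropriate topology.
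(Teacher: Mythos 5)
Your proof is correct, and while the skeleton of easy implications matches the paper's, two of your substantive steps take genuinely different routes. For (i) $\Rightarrow$ (ii) the paper invokes the machinery of the binormal tensor product: complete positivity of $\id\ot\phi$ on $\calm^\pr\ot_{\rm bin}\calm=C^*(\calm^\pr,\calm)$ is quoted from the literature and then pushed to $\calb(H)$ by normality; your direct collapse of the form $\sum_{ij}(\tilde\phi(z_i^*z_j)\xi_j|\xi_i)$ on $\algmm$ into $\sum_{I,J}(\phi(B_I^*B_J)\zeta_J|\zeta_I)$, followed by Kaplansky density with bounded $*$-strongly convergent nets, is more self-contained and avoids the tensor-norm references, at the cost of having to handle the density step carefully (which you do). For the return from positivity of $\choi_\phi$, the paper never proves (iv) $\Rightarrow$ (ii) directly: it shows (iv) $\Leftrightarrow$ (v) by pairing $A^{\pr*}E_0A^\pr$ against $\choid_\phi$, proves (v) $\Rightarrow$ (iii) via $(\tilde\phi(X)B^\pr x_0|B^\pr x_0)=\lan B^{\pr*}XB^\pr,\choid_\phi\ran$, and closes (iii) $\Rightarrow$ (i) by restricting to $M_n\ot\calm$ and using nuclearity of $M_n$; your Schur-product argument on the $*$-algebra ${\rm span}(\calm^\pr E_0\calm^\pr)$ bypasses both $\choid_\phi$ and the nuclearity step entirely, and your (v) $\Rightarrow$ (i) via $\lan Z^*Z,\choid_\phi\ran\ge 0$ is the same identity the paper uses but aimed directly at complete positivity rather than at positivity of $\tilde\phi$. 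For (ii) $\Rightarrow$ (vi) the two arguments are essentially the same idea in different packaging: the paper extracts Kraus operators first and shows $\sum_i(A^\pr V_i-V_iA^\pr)^*(A^\pr V_i-V_iA^\pr)=0$ from the bimodule property, while you derive the intertwining relation $(A^\pr\ot 1)V=VA^\pr$ at the level of the minimal Stinespring dilation before reading off the Kraus operators; both land on $V_i\in\calm^{\pr\pr}=\calm$. One small presentational gap: in your (iv) $\Rightarrow$ (ii) argument you take $a_k=A_k^\pr E_0B_k^\pr$ rather than general finite sums $\sum_m A_{km}^\pr E_0B_{km}^\pr$, but the multi-index reindexing you already use in (i) $\Rightarrow$ (ii) repairs this immediately.
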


\begin{proof}
We first show that (i), (ii) and (iii) are equivalent.
Suppose that $\phi$ is completely positive. Then the map
${\rm id} \otimes \phi : \mathcal M^\pr \otimes_{\rm bin} \mathcal M \to \mathcal M^\pr \otimes_{\rm bin} \mathcal M$
is completely positive by
\cite[Proposition IV.2.3.4(ii)]{blackadar}, and so we see that $\tilde\phi$ is completely positive on
$C^*(\calm^\pr,\calm)=\calm^\pr\ot_{\rm bin}\calm$, which is weakly dense in $\calb(H)$.
Since $\tilde\phi$ is normal on $\calb(H)$, we conclude
that $\tilde \phi : \calb(H) \to \calb(H)$ is also completely positive.
The direction (ii) $\Longrightarrow$ (iii) is clear. For the direction (iii) $\Longrightarrow$ (i),
we may assume that $\calm$ is not finite dimensional. For every $n=1,2,\dots$, the von Neumann algebra $\calm^\pr$
contains a $*$-subalgebra which is isomorphic to $M_n$ by \cite[Proposition III.1.5.14]{blackadar}.
Since $M_n$ is nuclear, the norm inherited from the inclusion
$$
M_n \otimes \calm \subset \calm^\pr \otimes_{\rm bin} \calm \subset \calb(H)
$$
is the unique $C^*$-tensor norm. Therefore, the restriction of the positive map $\tilde\phi:\calb(H)\to\calb(H)$
on $M_n \otimes \calm$ is also positive, and we see that $\phi$ is completely positive.

Now, we show that the statements (iii), (iv) and (v) are also equivalent. The direction (iii) $\Longrightarrow$ (iv)
is clear since $E_0$ is positive. For every $A^\pr\in\calm^\pr$, we have
$$
\lan A^{\pr *} E_0 A^\pr, \choid_\phi \ran
=\lan \tilde\phi(A^{\pr *} E_0 A^\pr), E_0 \ran
=\lan A^{\pr *} \tilde\phi(E_0) A^\pr, E_0 \ran
=\lan \choi_\phi, A^\pr E_0 A^{\pr *}\ran.
$$
Because $x_0$ is cyclic for $\calm^\pr$, we see that $\choid_\phi$ is
positive if and only if $\choi_\phi$ is  positive.
For every $X\in \calb(H)$ and $B^\pr\in\calm^\pr$, we also have
$$
(\tilde\phi(X)B^\pr x_0|B^\pr x_0)
=(B^{\pr *}\tilde\phi(X)B^\pr x_0|x_0)
=(\tilde\phi (B^{\pr *}XB^\pr) x_0|x_0)
=\lan B^{\pr *}XB^\pr, \choid_\phi\ran,
$$
by the $\calm^\pr$-bimodule map property of $\tilde\phi$.
Suppose that  $\choid_\phi$ is positive. Then we have $(\tilde\phi(X)B^\pr x_0|B^\pr x_0)\ge 0$
for every  $B^\pr\in\calm^\pr$ and positive $X\in\calb(H)$.
Since $x_0$ is cyclic for $\calm^\pr$, we see that $\tilde\phi(X)$ is positive whenever
$X\in\calb(H)$ is positive.

It remains to show that (vi) is also equivalent to the other statements. It is clear that (vi) implies that
$\phi$ is completely positive. We will show that (ii) implies (vi).
To do this, we use Kraus' theorem \cite{kraus-71} that a normal completely positive map $\tilde\phi$
from $\calb(H)$ into itself is of the form $\tilde\phi(X)=\sum_{i=1}^\infty V_i^*XV_i$ with $V_i\in \calb(H)$
and $\sum_{i=1}^\infty{V_i^*V_i}\le \|\phi\|_{\rm cb}I$.
Then it remains to show that $V_i\in \calm$.
For every $A^\pr \in \mathcal M'$, we have
$$
\begin{aligned}
\sum_{i=1}^\infty (A^\pr V_i - &V_i A^\pr)^*(A^\pr V_i - V_i A^\pr)\\
& = \sum_{i=1}^\infty V_i^*A^{\pr *}A^\pr V_i - \sum_{i=1}^\infty V_i^*A^{\pr *} V_i A^\pr
    - \sum_{i=1}^\infty A^{\pr *} V_i^* A^\pr V_i + \sum_{i=1}^\infty A^{\pr *} V_i^*V_iA^\pr \\
& = \tilde \phi(A^{\pr *} A^\pr) - \tilde \phi(A^{\pr *})A^\pr - A^{\pr *}\tilde \phi(A^\pr) + A^{\pr *}\tilde \phi(I)A^\pr \\
& = 0,
\end{aligned}
$$
by the $\mathcal M'$-bimodule map property of $\tilde \phi$.
This implies that $A^\pr V_i = V_i A^\pr $ for all $A^\pr \in \mathcal M'$.
By the double commutant theorem, each $V_i$ belongs to $\mathcal M$.
\end{proof}

\begin{corollary}\label{cor}
For a unit vector $x\in H$ and the projection $E$ onto the subspace
generated by $x$, the following are equivalent:
\begin{enumerate}
\item[{\rm (i)}]
there exists $\phi\in\calcb^\sigma_\e(\calm)$ such that
$\choi_\phi=E$;
\item[{\rm (ii)}]
there exists $\phi\in\calcb^\sigma_\e(\calm)$ such that
$\choid_\phi=E$;
\item[{\rm (iii)}]
there exists $A\in\calm$ such that $A x_0=x$.
\end{enumerate}
\end{corollary}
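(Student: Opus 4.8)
The plan is to prove the two explicit implications $(\mathrm{iii})\Rightarrow(\mathrm{i})$ and $(\mathrm{iii})\Rightarrow(\mathrm{ii})$ directly from the elementary-operator formulas (\ref{choi_ele}) and (\ref{choid_ele}), and to obtain the reverse implications $(\mathrm{i})\Rightarrow(\mathrm{iii})$ and $(\mathrm{ii})\Rightarrow(\mathrm{iii})$ by feeding the positivity of the projection $E=x\ro x$ into Theorem \ref{thm}. Throughout I would use the elementary identities $V^*(x_0\ro x_0)V=(V^*x_0)\ro(V^*x_0)$ and $V(x_0\ro x_0)V^*=(Vx_0)\ro(Vx_0)$, valid for any $V\in\calb(H)$, together with the fact that $E=x\ro x$ because $x$ is a unit vector.

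For the forward direction, suppose $A\in\calm$ satisfies $Ax_0=x$. For $(\mathrm{iii})\Rightarrow(\mathrm{i})$ I would take the elementary operator $\phi=\phi_{A,A^*}$, which lies in $\calcb^\sigma_\e(\calm)$; then (\ref{choi_ele}) gives $\choi_\phi=Ax_0\ro (A^*)^*x_0=x\ro x=E$. For $(\mathrm{iii})\Rightarrow(\mathrm{ii})$ the conjugate choice $\phi=\ad_A=\phi_{A^*,A}$ together with (\ref{choid_ele}) gives $\choid_\phi=Ax_0\ro Ax_0=x\ro x=E$. These are immediate once $M$ and $N$ are matched against the target $x\ro x$.

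The content lies in the two reverse implications. Assume $\choi_\phi=E$. Since $E\ge 0$, Theorem \ref{thm} tells us that $\phi$ is completely positive and $\tilde\phi$ admits a Kraus decomposition $\tilde\phi(X)=\sum_i V_i^*XV_i$ with $V_i\in\calm$. Evaluating at $E_0=x_0\ro x_0$ gives $\choi_\phi=\sum_i (V_i^*x_0)\ro(V_i^*x_0)$, so comparing quadratic forms with $E=x\ro x$ yields $\sum_i|(z|V_i^*x_0)|^2=|(z|x)|^2$ for every $z\in H$. Choosing $z\perp x$ forces $V_i^*x_0\in\mathbb Cx$ for all $i$, say $V_i^*x_0=\lambda_i x$, while $z=x$ gives $\sum_i|\lambda_i|^2=1$; in particular some $\lambda_{i_0}\ne0$, and then $A:=\lambda_{i_0}^{-1}V_{i_0}^*\in\calm$ satisfies $Ax_0=x$. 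The implication $(\mathrm{ii})\Rightarrow(\mathrm{iii})$ is identical after recording the predual Kraus form $\tilde\phi_*(\varrho)=\sum_iV_i\varrho V_i^*$, so that $\choid_\phi=\sum_i(V_ix_0)\ro(V_ix_0)$; the same comparison produces $V_ix_0=\mu_i x$ with some $\mu_{i_0}\ne0$, whence $A:=\mu_{i_0}^{-1}V_{i_0}$ works.

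The step I would flag as the crux is the deduction that a sum of positive rank-one operators $\sum_i w_i\ro w_i$ equal to the rank-one projection $x\ro x$ must have every $w_i$ proportional to $x$. I would handle this not at the operator level but through the quadratic-form identity $\sum_i|(z|w_i)|^2=|(z|x)|^2$: since each summand is nonnegative, the vanishing at $z\perp x$ is genuine and forces $w_i\in\mathbb Cx$, and this argument is insensitive to the (merely $\sigma$-weak) topology in which the Kraus series converges. The remaining ingredients --- matching the elementary-operator formulas and invoking the chain $\choi_\phi\ge0\Leftrightarrow\choid_\phi\ge0\Leftrightarrow(\mathrm{vi})$ of Theorem \ref{thm} --- are routine.
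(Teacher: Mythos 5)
Your proposal is correct and follows essentially the same route as the paper: elementary operators $\phi_{A,A^*}$ and $\ad_A$ for the forward implications, and the Kraus decomposition from Theorem \ref{thm}(vi) for the reverse ones, with your quadratic-form argument $\sum_i|(z|V_i^*x_0)|^2=|(z|x)|^2$ being just a rephrasing of the paper's observation that each $V_i^*E_0V_i=(V_i^*x_0)\ro(V_i^*x_0)$ is dominated by the rank-one projection $E$ and hence has range in $\mathbb Cx$.
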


\begin{proof}
For the direction (iii) $\Longrightarrow$ (i) and (iii)
$\Longrightarrow$ (ii), we may take $\phi=\ad_{A^*}$ and
$\phi=\ad_{A}$, respectively. Now, we suppose that $\choi_\phi=E$ with
$\phi\in\calcb^\sigma_\e(\calm)$. Then $\phi(X)=\sum_{i=1}^\infty
V_i^* X V_i$ with $V_i\in\calm$ and $\sum_{i=1}^\infty{V_i^*V_i}\le
\|\phi\|_{\rm cb}I$ by Theorem \ref{thm}. Then, we have
$$
V_i^*E_0V_i\le \sum_{i=1}^\infty V_i^*E_0V_i=\tilde\phi(E_0)=E,
$$
for every $i=1,2,\dots$, and we see that $\lambda_i V_i^*x_0= x$ for some $i$
with a nonzero scalar $\lambda_i$. We take $A=\lambda_i V_i^*$, to get $A x_0=x$. Therefore, we proved that (i) implies (iii).
Exactly the same argument works for the direction
(ii) $\Longrightarrow$ (iii).
\end{proof}

Corollary \ref{cor} tells us that $\{Ax_0: A\in\calm\}=H$ is a necessary condition for the
surjectivity of the correspondences $\phi\mapsto \choi_\phi$ and $\phi\mapsto \choid_\phi$.
We show that it is also sufficient.

\begin{theorem}\label{surj}
Suppose that $\calm$ is a factor acting on a separable Hilbert space
with a separating and cyclic vector $x_0$. Then the following are
equivalent:
\begin{enumerate}
\item[{\rm (i)}]
$\phi\mapsto\choi_\phi:\calcb^\sigma_\e(\calm)\to\calb(H)$ is
surjective;
\item[{\rm (ii)}]
$\phi\mapsto\choid_\phi:\calcb^\sigma_\e(\calm)\to\calt(H)$ is
surjective;
\item[{\rm (iii)}]
$\{Ax_0: A\in\calm\}=H$;
\item[{\rm (iv)}]
$\calm$ is finite dimensional.
\end{enumerate}
\end{theorem}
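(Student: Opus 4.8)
The plan is to read off two of the implications for free from Corollary \ref{cor} and then close the logic with a single genuinely new step. Since Corollary \ref{cor} asserts the equivalence of ``$E=\choi_\phi$ for some $\phi\in\calcb^\sigma_\e(\calm)$'', ``$E=\choid_\phi$ for some $\phi\in\calcb^\sigma_\e(\calm)$'', and ``$x=Ax_0$ for some $A\in\calm$'' (for $E$ the projection onto a unit vector $x$), I would first observe that surjectivity of $\phi\mapsto\choi_\phi$ forces every such $E$ to be a $\choi_\phi$, hence every unit vector $x$ to be $Ax_0$ for some $A\in\calm$; rescaling then gives all of $H$, i.e.\ (iii). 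The identical argument with $\choid_\phi$ gives (ii) $\Rightarrow$ (iii). So the remaining task is to prove (iii) $\Rightarrow$ (iv) and to verify that (iv) makes both correspondences onto, which closes the cycles (i) $\Rightarrow$ (iii) $\Rightarrow$ (iv) $\Rightarrow$ (i) and (ii) $\Rightarrow$ (iii) $\Rightarrow$ (iv) $\Rightarrow$ (ii).

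The heart of the matter, and the step I expect to be the main obstacle, is (iii) $\Rightarrow$ (iv). I would introduce the orbit map $T\colon\calm\to H$ given by $T(A)=Ax_0$. It is bounded with $\|T(A)\|\le\|A\|$, it is injective because $x_0$ is separating, and it has dense range because $x_0$ is cyclic; assumption (iii) says precisely that $T$ is onto. Since $\calm$ and $H$ are both Banach spaces, the bounded inverse theorem then upgrades $T$ to a Banach space isomorphism, so $\calm$ would be linearly homeomorphic to the Hilbert space $H$. The obstruction in infinite dimensions is reflexivity: a Hilbert space is reflexive, whereas an infinite dimensional von Neumann algebra always carries an infinite sequence of pairwise orthogonal nonzero projections $p_1,p_2,\dots$, and the norm closure of their linear span is isometric to $c_0$ (since $\|\sum c_ip_i\|=\max_i|c_i|$), so it is not reflexive. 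As reflexivity is invariant under Banach space isomorphism, $\calm$ must be finite dimensional, which is (iv). The delicate point is exactly to recognize that surjectivity of the automatically dense-ranged orbit map is this strong a hypothesis.

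It then remains to check the easy directions (iv) $\Rightarrow$ (iii) and (iv) $\Rightarrow$ (i), (ii). If $\calm$ is finite dimensional, then $H$ is finite dimensional, so the dense subspace $\{Ax_0:A\in\calm\}$ is already closed and equals $H$, giving (iii). Using (iii) I can realize arbitrary $x,y\in H$ as $x=Mx_0$ and $y=N^*x_0$ with $M,N\in\calm$, and since $\phi_{M,N}\in\calcb^\sigma_\e(\calm)$, formula (\ref{choi_ele}) exhibits every rank one operator $x\ro y=\choi_{\phi_{M,N}}$ in the range of $\phi\mapsto\choi_\phi$. Such operators span $\calb(H)$ and the range is a linear subspace, so the correspondence is onto; formula (\ref{choid_ele}) gives the same conclusion for $\phi\mapsto\choid_\phi$, noting that $\calt(H)=\calb(H)$ in finite dimensions. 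This completes both cycles and proves the theorem.
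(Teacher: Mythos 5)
Your proposal is correct and follows the same skeleton as the paper's proof: both directions (i) $\Rightarrow$ (iii) and (ii) $\Rightarrow$ (iii) are read off from Corollary \ref{cor}, the implications out of (iv) are routine (the paper does not even spell out the elementary-operator argument you give via (\ref{choi_ele}) and (\ref{choid_ele}), which is a welcome addition), and the crux (iii) $\Rightarrow$ (iv) is in both cases reduced, via the open mapping / bounded inverse theorem applied to the orbit map $A\mapsto Ax_0$, to showing that an infinite-dimensional von Neumann algebra cannot be linearly homeomorphic to a Hilbert space. Where you genuinely diverge is the final obstruction. The paper restricts the isomorphism to copies of $\ell^\infty_n$ inside $\calm$, notes that their images are $n$-dimensional subspaces of $H$ and hence isometric to $\ell^2_n$, and derives a contradiction from the quantitative fact that the Banach--Mazur distance between $\ell^\infty_n$ and $\ell^2_n$ is $\sqrt{n}\to\infty$. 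You instead take an infinite orthogonal family of nonzero projections (which exists in any infinite-dimensional von Neumann algebra, and is immediate here since $\calm$ is an infinite-dimensional factor), observe that the closed span of such a family is isometric to $c_0$ because $\|\sum_i c_ip_i\|=\max_i|c_i|$, and conclude via reflexivity: $c_0$ is not reflexive, whereas every closed subspace of the reflexive space $H$ is, and reflexivity is a Banach-space isomorphism invariant. Both arguments are complete; yours trades the Banach--Mazur estimate for a softer qualitative invariant, while the paper's local version would still apply in situations where one only has arbitrarily large finite-dimensional $\ell^\infty_n$'s at hand rather than a genuine copy of $c_0$ — a distinction that is vacuous for von Neumann algebras but explains why either route closes the argument equally well here.
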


\begin{proof}
We  note that (i) implies (iii), and (ii) implies (iii) by Corollary \ref{cor}.
Because (iv) also implies (i) and (ii), it remains to show that (iii) implies (iv).
Suppose that $\mathcal M {x}_0 = H$.
Since ${x}_0$ is a separating vector for $\mathcal M$ and $\|Ax_0\|_H \le \|A\|_{\mathcal M} \|{x}_0\|_H$, the linear map
$$
\Phi : A \in \mathcal M \mapsto A {x}_0 \in H
$$
is a continuous bijection.
By the open mapping theorem, $\Phi$ is a homeomorphism actually.
If $\mathcal M$ is infinite dimensional, then it contains
the $n$-dimensional $L^\infty$-space $\ell^\infty_n$ for each $n \in \mathbb N$,
and the subspace $\Phi(\ell^\infty_n)$ of $H$ is isometric to $\ell^2_n$,
as an $n$-dimensional subspace of the Hilberst space $H$.
We have
$$
\|\Phi|_{\ell^\infty_n}\|~ \|\Phi^{-1}|_{\ell^2_n}\| \le \|\Phi\|~ \|\Phi^{-1}\| < \infty,
$$
for each $n=1,2,\dots$.
However, the Banch-Mazur distance between $\ell^\infty_n$ and $\ell^2_n$ is $\sqrt{n}$
\cite[Proposition 37.6]{tomczak}, which goes to infinity as $n \to \infty$.
\end{proof}

We present one more application of Theorem \ref{thm} in relation to separability criteria.
For a given $V\in\calm$, we recall that the map $\ad_V$ is defined by $\ad_V(A)=V^*AV$ for $A\in\calm$.

\begin{theorem}\label{horo}
Suppose that $\calm$ is a factor acting on a separable Hilbert space
with a separating and cyclic vector $x_0$,
and $S$ is a convex cone in $\calcb^\sigma_\e(\calm)$ satisfying the condition that
$\psi\circ\ad_V$ belongs to $S$ for every $V\in\calm$ and $\psi\in S$.
For $\phi\in\calcb^\sigma_\e(\calm)$, the
following are equivalent:
\begin{enumerate}
\item[{\rm (i)}]
$\lan \choi_\psi, \choid_\phi \ran \ge 0$ for every $\psi\in S$;
\item[{\rm (ii)}]
${\phi\circ\psi}$ is completely positive for every $\psi\in S$;
\item[{\rm (iii)}]
${\tilde\phi(\choi_\psi)}$ is positive for every $\psi\in S$;
\item[{\rm (iv)}]
${\tilde\psi_*(\choid_\phi)}$ is positive for every $\psi\in S$.
\end{enumerate}
\end{theorem}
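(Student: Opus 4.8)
The plan is to establish the equivalences in a cycle, using the bilinear pairing machinery and the characterizations of complete positivity from Theorem \ref{thm}. First I would unwind the pairing in condition (i). Using the $\calm^\pr$-bimodule property of $\tilde\phi$ together with the identity $\lan A,\tilde\phi_*(B)\ran=\lan\tilde\phi(A),B\ran$, I expect the pairing $\lan\choi_\psi,\choid_\phi\ran$ to admit two symmetric rewritings: one as $\lan\tilde\phi(\choi_\psi),E_0\ran=(\tilde\phi(\choi_\psi)x_0\mid x_0)$, and another as $\lan\choi_\psi,\tilde\phi_*(\choid_\phi)\ran$ or $\lan E_0,\tilde\psi_*(\choid_\phi)\ran=(\tilde\psi_*(\choid_\phi)x_0\mid x_0)$. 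This immediately connects (i) to the single inner products appearing in (iii) and (iv) evaluated at $x_0$, since $E_0=x_0\ro x_0$.

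The key device for upgrading these single inner products to full positivity statements is the cone condition on $S$: because $\psi\circ\ad_V\in S$ for all $V\in\calm$, and since $\choi_{\psi\circ\ad_V}$ and $\choid_{\psi\circ\ad_V}$ transform by conjugation with $V$ (via the elementary-operator identities \eqref{choi_ele} and \eqref{choid_ele} applied to the composition), condition (i) applied to the whole family $\{\psi\circ\ad_V:V\in\calm\}$ yields $(\tilde\phi(\choi_\psi)Vx_0\mid Vx_0)\ge 0$ for all $V\in\calm$. Since $x_0$ is cyclic for $\calm$, the vectors $Vx_0$ are dense in $H$, so this forces $\tilde\phi(\choi_\psi)\ge 0$ in $\calb(H)$; this is exactly the step from (i) to (iii). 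The same cyclicity-plus-density argument, run on the predual side, gives the equivalence of (i) and (iv). For (ii)$\Leftrightarrow$(iii), I would observe that $\tilde\phi(\choi_\psi)$ is, up to the bimodule identifications, the Choi operator $\choi_{\phi\circ\psi}$ of the composite map, so its positivity is equivalent to complete positivity of $\phi\circ\psi$ by Theorem \ref{thm}, condition (iv)$\Leftrightarrow$(i).

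The main obstacle I anticipate is verifying that $\choi_{\phi\circ\psi}=\tilde\phi(\choi_\psi)$, or more precisely that the composite $\phi\circ\psi$ again lies in $\calcb^\sigma_\e(\calm)$ and that $\widetilde{\phi\circ\psi}=\tilde\phi\circ\tilde\psi$ on $\algmm$ extends to a normal map, so that Theorem \ref{thm} is applicable to it. Both $\tilde\phi$ and $\tilde\psi$ are normal $\calm^\pr$-bimodule maps, so their composite is again a normal $\calm^\pr$-bimodule map on $\calb(H)$; restricting to $\calm$ recovers $\phi\circ\psi$, and Proposition \ref{bimodule} identifies the composite as $\widetilde{\phi\circ\psi}$. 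Then $\choi_{\phi\circ\psi}=(\tilde\phi\circ\tilde\psi)(E_0)=\tilde\phi(\tilde\psi(E_0))=\tilde\phi(\choi_\psi)$, which closes the gap. Once this compositional identity is secured, the remaining implications are routine applications of the cyclicity of $x_0$ and the positivity characterizations already proved, and the four conditions assemble into a single equivalence class.
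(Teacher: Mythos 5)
Your compositional identities $\widetilde{\phi\circ\psi}=\tilde\phi\circ\tilde\psi$, $\tilde\phi(\choi_\psi)=\choi_{\phi\circ\psi}$ and $\tilde\psi_*(\choid_\phi)=\choid_{\phi\circ\psi}$ are correct and are exactly what the paper uses for (ii) $\Leftrightarrow$ (iii) $\Leftrightarrow$ (iv). The gap is in your key step from (i) to (iii): the claimed identity $\lan \choi_{\psi\circ\ad_V},\choid_\phi\ran=(\tilde\phi(\choi_\psi)Vx_0\,|\,Vx_0)$ is false in general. Because $\ad_V$ is \emph{pre}composed with $\psi$, the conjugation by $V\in\calm$ sits inside the argument: $\choi_{\psi\circ\ad_V}=\tilde\psi(V^*E_0V)$, and neither $\tilde\psi$ nor $\tilde\phi$ commutes with conjugation by an element of $\calm$ --- they are only $\calm^\pr$-bimodule maps. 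Concretely, for $\psi=\phi_{M,N}$ one has $\choi_{\psi\circ\ad_V}=MV^*x_0\ro N^*V^*x_0$, which is not $V^*\choi_\psi V=V^*Mx_0\ro V^*N^*x_0$; taking $\phi=\id$, the two quantities you would equate are $(MV^*x_0|x_0)(VNx_0|x_0)$ versus $(NVx_0|x_0)(V^*Mx_0|x_0)$, and these coincide only when $\omega_{x_0,x_0}$ is tracial (as for the maximally entangled vector in the matrix case), not for a general separating and cyclic vector. So your density argument establishes positivity of the wrong quadratic form.

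The correct unwinding pushes everything to the trace-class side: $\lan \choi_{\psi\circ\ad_V},\choid_\phi\ran=\lan V^*E_0V,\ \tilde\psi_*\circ\tilde\phi_*(E_0)\ran=(\choid_{\phi\circ\psi}V^*x_0\,|\,V^*x_0)$. Cyclicity of $x_0$ for $\calm$ then shows that (i) is equivalent to the positivity of the trace-class operator $\choid_{\phi\circ\psi}$ for every $\psi\in S$, which is condition (v) of Theorem \ref{thm} applied to $\phi\circ\psi$ and hence equivalent to (ii); conditions (iii) and (iv) then follow from your compositional identities together with the equivalences (i) $\Leftrightarrow$ (iv) $\Leftrightarrow$ (v) of Theorem \ref{thm}. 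So the theorem is reachable from your scaffolding, but only by routing (i) $\Rightarrow$ (iii) through the complete positivity of $\phi\circ\psi$, not by the direct density argument you propose.
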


\begin{proof}
We first note that (i) holds if and only if $\lan \choi_{\psi\circ\ad_V}, \choid_\phi \ran \ge 0$
for every $\psi\in S$ and $V\in\calm$.
We have
$$
\begin{aligned}
\lan \choi_{\psi \circ \ad_V}, \choid_\phi \ran
&= \lan \tilde \psi (V^* E_0 V), \tilde \phi_* (E_0) \ran\\
&= \lan V^* E_0 V, \tilde \psi_* \circ \tilde \phi_* (E_0) \ran\\
&= \lan V^* E_0 V, \choid_{\phi \circ \psi} \ran
= ( \choid_{\phi \circ \psi} V^*x_0 | V^*x_0),
\end{aligned}
$$
for $\psi \in S$ and $V \in \mathcal M$.
Since $x_0$ is cyclic for $\calm$,
we see that (i) holds if and only if $\choid_{\phi \circ \psi}$ is positive if and only if
${\phi\circ\psi}$ is completely positive
by Theorem \ref{thm}.
The equivalences (ii) $\Longleftrightarrow$ (iii) and (ii) $\Longleftrightarrow$ (iv) follow from the identities
{$\tilde\phi(\choi_\psi)=\choi_{\phi\circ\psi}$} and {$\tilde\psi_*(\choid_\phi)=\choid_{\phi\circ\psi}$},
respectively, together with Theorem \ref{thm} again.
\end{proof}

In case of matrix algebras, we note that the statement (iv) in Theorem \ref{horo}
is the Horodecki's separability criteria \cite{horo-1} for
$\choid_\phi$ when $S$ is the set of all positive maps.
Furthermore, Theorem \ref{horo} (ii) is one of equivalent conditions for entanglement breaking channels in \cite{hsrus}.
The condition that
$\psi\circ\ad_V$ belongs to $S$ for every $V\in\calm$ and
$\psi\in S$ gives rise to the notion of
right mapping cones \cite{gks} in case of matrix algebras.

A variant $\sum_{i,j=1}^n e_{j,i}\ot \phi(e_{i,j})$ of the Choi
matrix also had been considered by de Pillis \cite{dePillis} and
Jamio\l kowski \cite{jam_72} prior to Choi \cite{choi75-10}, to
characterize Hermiticity preserving maps and positive maps between
matrix algebras. See also
\cite{{Paulsen_Shultz},{kye_Choi_matrix},{han_kye_Choi_mat}} for
further variants. It is easy to characterize positivity of $\phi$ in
terms of $\choid_\phi$. In fact, we do not need
$\phi\in\calcb^\sigma_\e(\calm)$ for this purpose. Recall that
$\choid_\phi=\omega_{x_0,x_0}\circ\tilde\phi$ when
$\phi\in\calcb^\sigma_\e(\calm)$, in the following:

\begin{theorem}\label{pos}
Suppose that $\calm$ is a factor acting on a separable Hilbert space with a separating and cyclic vector $x_0$.
For a linear map $\phi:\calm\to\calm$, the following are equivalent:
\begin{enumerate}
\item[{\rm (i)}]
$\phi$ is positive;
\item[{\rm (ii)}]
$(\omega_{x_0,x_0}\circ\tilde\phi)(A^\pr B)\ge 0$ for every $A^\pr\in \calm^{\pr +}$ and $B\in\calm^{+}$.
\end{enumerate}
\end{theorem}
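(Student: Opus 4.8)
The plan is to unwind condition (ii) into a concrete inequality on quadratic forms, and then to play the commutation of $\calm$ and $\calm^\pr$ against the cyclicity of $x_0$ for $\calm^\pr$. First I would record the basic computation: by the definition of $\tilde\phi$ in (\ref{tildephi}) and of $\omega_{x_0,x_0}$, for $A^\pr\in\calm^\pr$ and $B\in\calm$ one has $(\omega_{x_0,x_0}\circ\tilde\phi)(A^\pr B)=(A^\pr\phi(B)x_0\,|\,x_0)$. Thus condition (ii) is precisely the assertion that $(A^\pr\phi(B)x_0\,|\,x_0)\ge 0$ whenever $A^\pr\in\calm^{\pr +}$ and $B\in\calm^{+}$. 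Note that, as the statement stresses, this makes sense for any linear $\phi:\calm\to\calm$, since $\tilde\phi$ is already well defined on $\algmm$ without any normality or extension hypothesis.

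For the direction (i) $\Longrightarrow$ (ii), I would assume $\phi$ positive. Then $\phi(B)\in\calm^{+}$ for $B\in\calm^{+}$, and since $A^\pr\in\calm^\pr$ commutes with $\phi(B)\in\calm$, the product $A^\pr\phi(B)$ is a product of two commuting positive operators, hence positive; in particular $(A^\pr\phi(B)x_0\,|\,x_0)\ge 0$. For the converse (ii) $\Longrightarrow$ (i), fix $B\in\calm^{+}$; the goal is $\phi(B)\ge 0$, i.e. $(\phi(B)\xi\,|\,\xi)\ge 0$ for every $\xi\in H$. Because $x_0$ is separating for $\calm$ it is cyclic for $\calm^\pr$, so the vectors $V^\pr x_0$ with $V^\pr\in\calm^\pr$ are dense in $H$. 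For such a vector, using that $\phi(B)\in\calm$ commutes with $V^\pr\in\calm^\pr$, I compute
$$
(\phi(B)V^\pr x_0\,|\,V^\pr x_0)=(V^{\pr *}\phi(B)V^\pr x_0\,|\,x_0)=((V^{\pr *}V^\pr)\phi(B)x_0\,|\,x_0),
$$
and since $A^\pr:=V^{\pr *}V^\pr\in\calm^{\pr +}$, condition (ii) forces this quantity to be $\ge 0$. As $\phi(B)$ is bounded, the form $\xi\mapsto(\phi(B)\xi\,|\,\xi)$ is continuous, so its nonnegativity on the dense set $\{V^\pr x_0:V^\pr\in\calm^\pr\}$ yields $(\phi(B)\xi\,|\,\xi)\ge 0$ for all $\xi\in H$; hence $\phi(B)\ge 0$ and $\phi$ is positive.

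I expect no serious obstacle here. The two points needing care are the identity $V^{\pr *}\phi(B)V^\pr=(V^{\pr *}V^\pr)\phi(B)$, which rests entirely on the commutation between $\calm$ and $\calm^\pr$, and the passage from the dense subspace $\{V^\pr x_0\}$ to all of $H$, which is justified by the boundedness of $\phi(B)$. The essential input — and the reason the hypotheses are exactly what they are — is that cyclicity of $x_0$ for $\calm^\pr$ lets arbitrary test vectors be built from $\calm^\pr$, while positivity of $A^\pr=V^{\pr *}V^\pr$ matches the one-sided positivity demanded in (ii).
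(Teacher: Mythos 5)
Your argument is correct and is essentially the paper's own proof: both rest on the identity $(\omega_{x_0,x_0}\circ\tilde\phi)(A^\pr B)=(\phi(B)C^\pr x_0\,|\,C^\pr x_0)$ for $A^\pr=C^{\pr *}C^\pr$, combined with cyclicity of $x_0$ for $\calm^\pr$ and continuity of the quadratic form. The only cosmetic difference is that in the forward direction you invoke positivity of the product of commuting positive operators where the paper reads off nonnegativity directly from the same quadratic form.
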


\begin{proof}
Suppose that $\phi$ is positive. For positive $A^\pr\in\calm^{\pr +}$,
we write $A^\pr=C^{\pr *}C^\pr$ with $C^\pr\in \calm^{\pr}$. Then we have
\begin{equation}\label{cccc}
(\omega_{x_0,x_0}\circ\tilde\phi)(A^\pr B)=(\phi(B)C^\pr x_0|C^\pr x_0),
\end{equation}
which is nonnegative whenever $B$ is positive.
For the converse, we suppose that (ii) holds. Then the relation (\ref{cccc}) again
shows that $\phi(B)$ is positive whenever $B$ is positive, because
$x_0$ is cyclic for $\calm^\pr$.
\end{proof}

\section{Type ${\rm I}$ factors}\label{sec_c}

In this section, we will see that
$\calcb^\sigma_\e(\calm)= \calcb^\sigma(\calm)$ holds if and only if $\calm$ is of type ${\rm I}$.
We also characterize positivity of $\phi$ in terms of $\choi_\phi$ in case of type ${\rm I}$ factor.
For a given normal state $\sigma$ on $\calm$,
we consider the linear map $\phi:\calm\to\calm$ defined by
$$
\phi: A\mapsto \sigma(A)I,\qquad A\in\calm,
$$
which is unital completely positive and normal.
If $\tilde\phi$ extends to $\calb(H)$
then the extension $\tilde\phi:\calb(H)\to\calm^\pr$ is a normal conditional expectation by Proposition \ref{bimodule} and Theorem \ref{thm} (ii), and so
we see that $\calm^\pr$ must be of type I,
by Tomiyama's result \cite{tom59} (see also \cite[Theorem IV 2.2.2]{blackadar})
that if $\Phi:\calm_1\to \calm_2$
is a normal conditional expectation and $\calm_1$ is of type I then $\calm_2$ is also of type I.
Therefore, we see that if $\calcb^\sigma_\e(\calm)= \calcb^\sigma(\calm)$ then $\calm$ is of type ${\rm I}$.

Now, we consider the type ${\rm I}$ factor $\calm=\calb(K)$ with a separable Hilbert space $K$,
 and the Hilbert space $H={\mathcal H\mathcal S}(K)$ consisting of
Hilbert-Schmidt operators with the inner product
$$
(x|y)=\tr(xy^*)=\sum_{i,j}(xe_i|e_j)(e_j|ye_i),\qquad x,y\in H,
$$
for a given orthonormal basis $\{e_i\}$ of $K$.
We have
\begin{equation}\label{id-1}
(\xi_1\ro\eta_1)(\xi_2\ro\eta_2)
=(\xi_2|\eta_1)\xi_1\ro\eta_2,\qquad
(\xi\ro\eta)^*=\eta\ro\xi,\qquad
x(\xi\ro\eta)y=x\xi \ro y^*\eta,
\end{equation}
as operators on $K$, for $x,y\in \calb(K)$. We also have
\begin{equation}\label{id-2}
(\xi_1\ro\eta_1 | \xi_2\ro\eta_2)
=(\xi_1|\xi_2)(\eta_2|\eta_1),\qquad
(x| \xi\ro\eta)
=(x\eta|\xi),
\end{equation}
as vectors in $H={\mathcal H\mathcal S}(K)$,
for $x\in {\mathcal H\mathcal S}(K)$.

Suppose that $\calb(K)$ acts on $H$ by the left multiplication $L_x(y)=xy$ for $x\in\calb(K)$.
With the identification $H\simeq \bar K\ot  K$ by $\xi\ro\eta \leftrightarrow \bar\eta\ot\xi$,
we get the following commuting diagram:
\begin{equation}\label{tensor-left}
\begin{CD}
H @         >\simeq>>       \bar K\ot  K \\
      @V R_{y^*}L_x VV            @VV \bar y\ot x  V \\
H @         >\simeq>>       \bar K\ot K
\end{CD}
\end{equation}
for $x,y\in \calb(K)$, because $R_{y^*}L_x(\xi\ro\eta)=x(\xi\ro\eta)y^*=x\xi\ro y\eta\in H$
corresponds to $\overline{y\eta} \ot x\xi =(\bar y\ot x)(\bar\eta\ot\xi)\in \bar K\ot K$.
Then the von Neumann algebra
$$
L_{\calb(K)}=\{L_x\in\calb(H): x\in\calb(K)\}
$$
acting on $H$ has the commutant $R_{\calb(K)}=\{R_x:x\in\calb(K)\}$
with the action of $\calb(K)$ by right multiplications.
The von Neumann algebra generated by $L_{\calb(K)}$ and its commutant is the full algebra $\calb(H)$,
and we also have $\calb(\bar K)\overline\ot \calb(K)=\calb (\bar K\ot K)$.

Suppose that $\phi : \calb(K) \to \calb(K)$ is a normal completely bounded map.
It has the completely bounded predual map $\phi_* : \calt(K) \to \calt(K)$,
and the tensor product
$$
{\rm id}_{\calt(\bar K)}\otimes \phi_*  : \calt(\bar K) \widehat{\otimes} \calt(K) \to \calt(\bar K) \widehat{\otimes} \calt(K)
$$
through the operator space projective tensor product is also completely bounded.
We take the dual map
$$
 {\rm id}_{\calb(\bar K)} \otimes\phi  : \calb(\bar K) \widebar{\otimes} \calb(K) \to \calb(\bar K) \widebar{\otimes} \calb(K),
$$
which is a normal completely bounded map \cite{effros-ruan}.
Considering the commuting diagram
$$
\begin{CD}
\calb(H) @         >\simeq>>       \calb(\bar K\ot K) \\
      @V \tilde\phi VV                   @VV \id\ot \phi V \\
\calb(H) @         >\simeq>>       \calb(\bar K\ot K)
\end{CD}
$$
where $\tilde\phi$ is given by
$\tilde\phi( R_yL_x)=R_yL_{\phi(x)}$ for $x,y\in \calb(K)$, we conclude that
$\tilde\phi$ extends to a normal completely bounded map. We summarize as follows:

\begin{theorem}\label{extension}
For a factor $\calm$, the following are equivalent:
\begin{enumerate}
\item[{\rm (i)}]
the map $\tilde\phi$ extends to a normal completely bounded map on $\calb(H)$ for every $\phi\in \calcb^\sigma(\calm)$;
\item[{\rm (ii)}]
the map $\tilde\phi$ extends to a normal map on $\calb(H)$ for every $\phi\in \calcb^\sigma(\calm)$;
\item[{\rm (iii)}]
$\calm$ is of type {\rm I}.
\end{enumerate}
\end{theorem}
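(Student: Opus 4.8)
The plan is to establish the cycle (i) $\Longrightarrow$ (ii) $\Longrightarrow$ (iii) $\Longrightarrow$ (i), essentially organizing the two constructions carried out in the discussion preceding the statement. The implication (i) $\Longrightarrow$ (ii) is immediate, since a normal completely bounded map is in particular a normal map.

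For (ii) $\Longrightarrow$ (iii), I would test the hypothesis against a single well-chosen map. Fix a faithful normal state $\sigma$ on $\calm$ and take $\phi:A\mapsto\sigma(A)I$, which lies in $\calcb^\sigma(\calm)$ because it is unital, normal and completely positive. By (ii) its extension $\tilde\phi$ is normal on $\calb(H)$, so $\phi\in\calcb^\sigma_\e(\calm)$ and Theorem \ref{thm} makes $\tilde\phi$ completely positive. Since $\phi(I)=I$, the definition of $\tilde\phi$ gives $\tilde\phi(A^\pr)=A^\pr$ for $A^\pr\in\calm^\pr$, so $\tilde\phi$ is a unital idempotent; together with the $\calm^\pr$-bimodule property of Proposition \ref{bimodule} this makes $\tilde\phi$ a normal conditional expectation of $\calb(H)$ onto $\calm^\pr$. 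As $\calb(H)$ is of type I, Tomiyama's theorem \cite{tom59} forces $\calm^\pr$ to be of type I, and hence the factor $\calm$ is of type I as well.

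For (iii) $\Longrightarrow$ (i), I would realize the type I factor concretely as $\calm=\calb(K)=L_{\calb(K)}$ acting by left multiplication on $H=\calhs(K)$, with commutant $\calm^\pr=R_{\calb(K)}$. Under the unitary identification $H\simeq\bar K\ot K$ of (\ref{tensor-left}), the algebra $\calb(H)$ becomes the von Neumann algebra tensor product $\calb(\bar K)\,\overline\ot\,\calb(K)$, and the bimodule map $\tilde\phi$, determined by $\tilde\phi(R_yL_x)=R_yL_{\phi(x)}$, is intertwined with $\id_{\calb(\bar K)}\ot\phi$. It then suffices to show that this last map is normal and completely bounded. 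Passing to preduals, a normal completely bounded $\phi$ has a completely bounded predual $\phi_*$ on $\calt(K)$, so $\id_{\calt(\bar K)}\ot\phi_*$ is completely bounded on the operator space projective tensor product $\calt(\bar K)\,\widehat{\otimes}\,\calt(K)$ by functoriality; its Banach space dual is exactly $\id_{\calb(\bar K)}\ot\phi$, which is therefore normal and completely bounded by the Effros--Ruan duality \cite{effros-ruan} identifying $(\calt(\bar K)\,\widehat{\otimes}\,\calt(K))^*$ with $\calb(\bar K)\,\overline\ot\,\calb(K)$. Hence $\tilde\phi$ extends to a normal completely bounded map on $\calb(H)$.

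I expect the main obstacle to be this last implication, and within it the bookkeeping of the three distinct tensor products: one must match the purely algebraic $\calm^\pr$-bimodule map $\tilde\phi$ on $\algmm$ with $\id\ot\phi$ through the isomorphism $\calb(H)\cong\calb(\bar K)\,\overline\ot\,\calb(K)$, and then extract normality rather than mere boundedness of $\id\ot\phi$. The predual picture is what delivers this: a map arising as the Banach space adjoint of a bounded predual map is automatically normal, while the completely bounded hypothesis on $\phi$ is exactly what guarantees that $\id\ot\phi_*$ persists on the operator space projective tensor product. This is also the only point where the type I assumption is genuinely needed, via the identification $\calb(H)=\calm^\pr\,\overline\ot\,\calm$.
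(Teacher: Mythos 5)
Your proposal is correct and follows essentially the same route as the paper: the implication (ii) $\Longrightarrow$ (iii) via the map $A\mapsto\sigma(A)I$, whose normal extension is a conditional expectation of $\calb(H)$ onto $\calm^\pr$ so that Tomiyama's theorem applies, and (iii) $\Longrightarrow$ (i) via the concrete realization $\calm=\calb(K)$ on $\calhs(K)$, passing to the predual $\id\ot\phi_*$ on the operator space projective tensor product and dualizing by Effros--Ruan. The extra details you supply (that $\tilde\phi$ is a unital idempotent onto $\calm^\pr$, and that duals of bounded predual maps are automatically normal) are accurate elaborations of steps the paper leaves implicit.
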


The direction (iii) $\Longrightarrow$ (i) of Theorem \ref{extension} can be also seen by a result \cite{hou_2010}
that every normal completely bounded map on a type ${\rm I}$ factor is the sum of elementary operators.
The following simple proposition will be useful to discuss a type ${\rm I}$ factor $\calb(K)$
which acts on $H=\calhs(K)$ by left multiplications.

\begin{proposition}\label{dense-vec}
If $x_0\in H$ is separating and cyclic  for $\calb(K)$ then we have the following:
\begin{enumerate}
\item[{\rm (i)}]
both $\{x_0\xi:\xi\in K\}$ and $\{x_0^*\xi:\xi\in K\}$ are dense in $K$;
\item[{\rm (ii)}]
for each $k=1,2,\dots$, the set $\{Vx_0\in \calhs(K) : V\in\calb(K),\ \rk V\le k\}$ is dense in $\{x\in\calhs(K):\rk x\le k\}$.
\end{enumerate}
\end{proposition}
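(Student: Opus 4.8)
The plan is to translate the abstract conditions ``separating'' and ``cyclic'' for the element $x_0\in H=\calhs(K)$, now viewed as a concrete Hilbert--Schmidt operator on $K$, into statements about the range and kernel of that operator. Part (i) is essentially this dictionary, and part (ii) is then a short approximation argument resting on it. Throughout I would use the identities (\ref{id-1}) and (\ref{id-2}) relating products of rank-one operators $\xi\ro\eta$ to the action of $\calb(K)$, which are exactly what is needed to compute the relevant operators.

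For part (i), the starting point is that $\calb(K)$ acts by left multiplication $L_x(y)=xy$, so the separating condition reads $x x_0=0\Rightarrow x=0$. To see this forces $x_0$ to have dense range I argue by contraposition: if $\overline{\operatorname{ran}x_0}\neq K$, choose a unit vector $\eta\perp\operatorname{ran}x_0$, so that $x_0^*\eta=0$; then by the identity $x(\xi\ro\eta')y=x\xi\ro y^*\eta'$ of (\ref{id-1}) we get $(\eta\ro\eta)x_0=\eta\ro x_0^*\eta=0$ with $\eta\ro\eta\neq 0$, contradicting separation. Hence $\{x_0\xi:\xi\in K\}$ is dense in $K$. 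Similarly, cyclicity of $x_0$, i.e.\ density of $\{x x_0:x\in\calb(K)\}$ in $\calhs(K)$, forces $x_0$ to be injective: if $\zeta$ were a unit vector in $\ker x_0$, then every product $x x_0$ would annihilate $\zeta$, so no operator of the form $\xi\ro\zeta$ with $\xi\neq 0$ could lie in the closure, because the Hilbert--Schmidt norm dominates the operator norm and hence controls the value at $\zeta$. Therefore $\ker x_0=\{0\}$, equivalently $\overline{\operatorname{ran}x_0^*}=(\ker x_0)^\perp=K$, which is precisely the density of $\{x_0^*\xi:\xi\in K\}$.

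For part (ii), note first that any $V$ with $\rk V\le k$ gives $\rk(V x_0)\le k$ and $V x_0\in\calhs(K)$, so the set in question is contained in $\{x\in\calhs(K):\rk x\le k\}$, and only density needs proof. Given a target $x$ of rank $\le k$, write $x=\sum_{i=1}^k \xi_i\ro\eta_i$ with $\xi_i,\eta_i\in K$. Using the density of $\{x_0^*\zeta\}$ established in part (i), for each $i$ pick $\zeta_i\in K$ with $\|x_0^*\zeta_i-\eta_i\|_K$ as small as desired, and set $V=\sum_{i=1}^k \xi_i\ro\zeta_i$, a bounded operator of rank $\le k$. Since $(\xi\ro\zeta)x_0=\xi\ro x_0^*\zeta$ by (\ref{id-1}), we have $V x_0=\sum_i \xi_i\ro x_0^*\zeta_i$, and hence, using $\|\xi\ro\eta\|_H=\|\xi\|_K\|\eta\|_K$ from (\ref{id-2}) together with the triangle inequality,
\[
\|V x_0-x\|_H\le \sum_{i=1}^k \|\xi_i\|_K\,\|x_0^*\zeta_i-\eta_i\|_K,
\]
which can be made smaller than any prescribed $\e>0$ by choosing the $x_0^*\zeta_i$ close enough to the $\eta_i$. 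This proves density.

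The argument is a dictionary followed by a one-line approximation, so there is no serious obstacle. The only points requiring care are the bookkeeping in part (i) --- keeping track of which hypothesis (separating versus cyclic) yields which of the two density statements, and the passage between ``dense range'' and ``injective'' via adjoints --- and, in part (ii), expressing the prescribed rank $\le k$ target as a sum of at most $k$ rank-one terms so that the approximant $V$ stays within the rank constraint. Because the density of $\{x_0^*\zeta\}$ is exactly what drives the approximation, part (i) must be in hand before part (ii) is carried out.
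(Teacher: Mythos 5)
Your proposal is correct and follows essentially the same route as the paper: translate ``separating'' into dense range and ``cyclic'' into injectivity of $x_0$ as an operator (the paper simply asserts this equivalence, while you supply the short contrapositive arguments), and then approximate a rank-$\le k$ operator $\sum_{i=1}^k\xi_i\ro\eta_i$ by $Vx_0$ with $V=\sum_{i=1}^k\xi_i\ro\zeta_i$ using the density of $\{x_0^*\zeta\}$, exactly as in the paper's proof of (ii).
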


\begin{proof}
We note that $x_0\in H$ is a separating and cyclic vector if and only if it is injective with a dense range
as an operator in $\calhs(K)$, and so the statement (i) follows.
In order to prove (ii), suppose that $x=\sum_{i=1}^k \xi_i\ro\eta_i\in\calhs(K)$.
We take $\zeta_i\in K$ such that $\|\eta_i-x_0^*\zeta_i\|<\e/M$ for $i=1,2,\dots,k$, with $M=\sum_{i=1}^k\|\xi_i\|$.
Putting $V=\sum_{i=1}^k\xi_i\ro\zeta_i\in\calb(K)$, we have
$$
\|x-Vx_0\|=
\|\textstyle\sum_{i=1}^k\xi_i\ro\eta_i-(\xi_i\ro \zeta_i) x_0\|=
\|\textstyle\sum_{i=1}^k\xi_i\ro (\eta_i-x_0^*\zeta_i)\|<\e,
$$
as was required.
\end{proof}

For given normal functionals $\sigma^\pr\in\calm^\pr_*$ and $\tau\in\calm_*$, we define
the linear functional $\sigma^\pr\cdot\tau$ on $\algmm$ by
$$
\lan A^\pr B, \sigma^\pr\cdot\tau \ran
=\lan A^\pr, \sigma^\pr \ran \lan B, \tau \ran,\qquad A^\pr\in\calm^\pr,\ B\in\calm.
$$
Suppose that $\calm=\calb(K)$ as before.
By the identification $\calb(H)\simeq\calb(\bar K\ot K)$, we also see that
$\sigma^\pr \cdot\tau$ extends to a normal functional, and
$\sigma^\pr\cdot\tau$ is positive whenever $\sigma^\pr$ and $\tau$ are positive.
For a given $\phi\in\calcb^\sigma(\calb(K))$, we have
$$
\begin{aligned}
\lan A^\pr B, \tilde\phi_*(\sigma^\pr\cdot\tau) \ran
&=\lan A^\pr \phi(B), \sigma^\pr\cdot\tau \ran\\
&=\lan A^\pr, \sigma^\pr \ran \lan \phi(B), \tau \ran\\
&=\lan A^\pr, \sigma^\pr \ran \lan B, \phi_*(\tau) \ran
=\lan A^\pr B, \sigma^\pr\cdot\phi_*(\tau) \ran,
\end{aligned}
$$
for $A^\pr\in\calm^\pr$ and $B\in\calm$. Therefore, we have the identity
\begin{equation}\label{iddddxxx}
\tilde\phi_*(\sigma^\pr\cdot\tau)=\sigma^\pr\cdot\phi_*(\tau),
\qquad \sigma^\pr\in\calm^\pr_*,\ \tau\in\calm_*.
\end{equation}
This identity may be considered as the dual object of the identity (\ref{tildephi}), which
has been used as the definition of $\tilde\phi$.

In order to characterize the positivity of $\phi$ in terms of $\choi_\phi$, we need some
identities. We begin with
$$
\begin{aligned}
[(\xi_1\ro\eta_1)\ro(\xi_2\ro\eta_2)](\zeta\ro\omega)
&=(\zeta\ro\omega|\xi_2\ro\eta_2)\xi_1\ro\eta_1\\
&=(\zeta|\xi_2)(\eta_2|\omega)\xi_1\ro\eta_1
=(\xi_1\ro\xi_2)(\zeta\ro\omega)(\eta_2\ro\eta_1),
\end{aligned}
$$
which implies the identity
\begin{equation}\label{hhhh}
(\xi_1\ro\eta_1)\ro(\xi_2\ro\eta_2)=R_{\eta_2\ro\eta_1}L_{\xi_1\ro\xi_2}\in \calb(H),
\end{equation}
for vectors $\xi_1\ro\eta_1$ and $\xi_2\ro\eta_2$ in $H$.
We also have the following identity
\begin{equation}\label{idddd}
\lan x\ro y, \omega_{\xi,\eta}\cdot \sigma \ran=\lan x\xi\ro y\eta, \sigma \ran,\qquad
x,y\in H,
\end{equation}
for $\sigma\in\calb(K)_*$ and $\omega_{\xi,\eta}\in\calb(K)^\pr_*$ given by $\omega_{\xi,\eta}(R_y)=(y\xi|\eta)$.
To see this, we write
$x=\sum_ix_i e_i\ro f_i$ and $y=\sum_j y_j g_j\ro h_j$ in $\calhs(K)$. By (\ref{hhhh}), we have
$x\ro y
=\sum_{i,j} x_i{\bar y}_j  R_{h_j\ro f_i} L_{e_i\ro g_j}$. By the identity
$$
\lan h_j\ro f_i, \omega_{\xi,\eta} \ran=((h_j\ro f_i)\xi|\eta)=(\xi|f_i)(h_j|\eta),
$$
we have
$$
\begin{aligned}
\lan x\ro y, \omega_{\xi,\eta}\cdot\sigma \ran
&=\textstyle\sum_{i,j} x_i{\bar y}_j\lan h_j\ro f_i, \omega_{\xi,\eta} \ran \lan e_i\ro g_j, \sigma \ran\\
&=\textstyle\sum_{i,j} x_i{\bar y}_j (\xi|f_i)(h_j|\eta) \lan e_i\ro g_j, \sigma \ran\\
&=\lan (\textstyle\sum_i x_i (\xi|f_i)e_i)\ro (\textstyle\sum_j y_j(\eta|h_j)g_j), \sigma \ran\\
&=\lan (\textstyle\sum_i x_i(e_i\ro f_i)\xi)\ro(\textstyle\sum_j y_j(g_j\ro h_j)\eta), \sigma \ran,
\end{aligned}
$$
as was required.

\begin{theorem}
Suppose that $\calb(K)$ acts on the Hilbert space $H=\calhs(K)$ by left multiplications.
For $\phi\in\calcb^\sigma(\calb(K))$, the following are equivalent:
\begin{enumerate}
\item[{\rm (i)}]
$\phi$ is positive;
\item[{\rm (ii)}]
$\lan\choi_\phi,\sigma^\pr\cdot\tau\ran\ge 0$ for every $\sigma^\pr\in\calb(K)^{\pr +}_*$ and $ \tau \in\calb(K)_*^+$.
\end{enumerate}
\end{theorem}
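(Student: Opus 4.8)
The plan is to prove both implications by reducing the pairing $\lan\choi_\phi,\sigma^\pr\cdot\tau\ran$ to a statement about the values of $\phi$ on positive rank-one operators of $\calb(K)$, using the two auxiliary identities (\ref{iddddxxx}) and (\ref{idddd}) together with the density furnished by Proposition \ref{dense-vec}. Since $\calm=\calb(K)$ is of type I, Theorem \ref{extension} guarantees that $\tilde\phi$ extends to a normal completely bounded map, so that $\choi_\phi=\tilde\phi(E_0)$ and the predual map $\tilde\phi_*$ are both available.

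First I would move $\phi$ across the pairing. Writing $E_0=x_0\ro x_0$ and using the defining relation of the predual together with the identity (\ref{iddddxxx}), I get
$$
\lan\choi_\phi,\sigma^\pr\cdot\tau\ran
=\lan E_0,\tilde\phi_*(\sigma^\pr\cdot\tau)\ran
=\lan E_0,\sigma^\pr\cdot\phi_*(\tau)\ran .
$$
Then I specialize to $\sigma^\pr=\omega_{\xi,\xi}$ and apply the identity (\ref{idddd}) with $x=y=x_0$, which collapses the expression to a value on a rank-one operator of $\calb(K)$:
$$
\lan\choi_\phi,\omega_{\xi,\xi}\cdot\tau\ran
=\lan E_0,\omega_{\xi,\xi}\cdot\phi_*(\tau)\ran
=\lan x_0\xi\ro x_0\xi,\phi_*(\tau)\ran
=\tau\big(\phi(x_0\xi\ro x_0\xi)\big).
$$
Since $x_0\xi\ro x_0\xi\ge 0$ in $\calb(K)$, this already yields (i) $\Longrightarrow$ (ii) for $\sigma^\pr$ of the special form $\omega_{\xi,\xi}$; the general positive normal $\sigma^\pr\in\calb(K)^{\pr+}_*$ is a norm-convergent sum $\sum_n\omega_{\xi_n,\xi_n}$, and bilinearity together with continuity of the pairing finishes this direction.

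For the converse (ii) $\Longrightarrow$ (i), I would read the displayed identity backwards: taking $\sigma^\pr=\omega_{\xi,\xi}$ and letting $\tau$ range over $\calb(K)_*^+$ shows $\tau(\phi(x_0\xi\ro x_0\xi))\ge 0$ for every positive normal $\tau$, hence $\phi(x_0\xi\ro x_0\xi)\ge 0$ for every $\xi\in K$. By Proposition \ref{dense-vec}~(i) the set $\{x_0\xi:\xi\in K\}$ is dense in $K$, so norm continuity of $\phi$ upgrades this to $\phi(\zeta\ro\zeta)\ge 0$ for every $\zeta\in K$, and therefore $\phi(F)\ge 0$ for every finite-rank positive $F$. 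Finally, approximating an arbitrary positive $B\in\calb(K)$ in the $\sigma$-weak topology by finite-rank positives (for instance $P_nBP_n$ with finite-rank projections $P_n\uparrow I$) and invoking the normality of $\phi$ gives $\phi(B)\ge 0$.

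I expect the algebraic reductions to be routine once the two identities are in hand; the only genuinely delicate point is the last passage in the converse, where positivity must be transported from the dense family of rank-one operators $x_0\xi\ro x_0\xi$ to all positive operators. This relies on combining three facts in the right order: density of $\{x_0\xi\}$ from Proposition \ref{dense-vec}, norm continuity of the completely bounded map $\phi$, and $\sigma$-weak continuity (normality) of $\phi$ together with the $\sigma$-weak closedness of the positive cone.
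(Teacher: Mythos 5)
Your proof is correct and follows essentially the same route as the paper's: both reduce the pairing via the identities (\ref{iddddxxx}) and (\ref{idddd}) to the rank-one operators $x_0\xi\ro x_0\xi$ and then invoke the density statement of Proposition \ref{dense-vec}. The only difference is at the endpoints: the paper obtains (i) $\Rightarrow$ (ii) directly from positivity of the functional $\sigma'\cdot\phi_*(\tau)$ paired against $E_0\ge 0$ (so no decomposition of $\sigma'$ into vector functionals is needed), and for (ii) $\Rightarrow$ (i) it concludes that the trace-class operator $\phi_*(\tau)$ is positive and dualizes, which spares your final finite-rank/normality approximation step.
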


\begin{proof}
For every $\sigma^\pr\in\calm^{\pr {+}}_*$ and $\tau\in{\calm^+_*}$, we have the identity
$$
\lan \choi_\phi, \sigma^\pr\cdot\tau\ran
=\lan E_0, \tilde\phi_*(\sigma^\pr\cdot\tau)\ran
=\lan E_0, \sigma^\pr\cdot \phi_*(\tau)\ran,
$$
by (\ref{iddddxxx}), and so we see that the positivity of $\phi$ implies (ii).
For $\omega_{\xi,\xi}\in\calb(K)^{\pr +}_*$, $\tau\in \calb(K)^+_*$ and $\xi\in K$, we also have
$$
\begin{aligned}
\lan\choi_\phi,\omega_{\xi,\xi}\cdot\tau\ran
&=\lan x_0\ro x_0, \tilde\phi_*(\omega_{\xi,\xi}\cdot\tau)\ran\\
&=\lan x_0\ro x_0, \omega_{\xi,\xi}\cdot\phi_*(\tau)\ran
=\lan x_0\xi\ro x_0\xi, \phi_*(\tau) \ran,
\end{aligned}
$$
by (\ref{iddddxxx}) and (\ref{idddd}). Note that $\{x_0\xi:\xi\in K\}$ is dense in $K$ by Proposition \ref{dense-vec},
so that (ii) implies (i).
\end{proof}

Suppose that $K$ is a finite dimensional Hilbert space with the usual orthonormal basis $\{e_1,\dots, e_n\}$. Then
$\id_K=\sum_{i=1}^n e_i\ro e_i\in H=\calhs(K)$ is a separating and cyclic vector for $L_{\calb(K)}$ acting on
$H$ by left multiplications. We have
$$
\id_{K}\ro\id_{K}
=\sum_{i,j=1}^n(e_i\ro e_i)\ro (e_j\ro e_j)
=\sum_{i,j=1}^n R_{e_j\ro e_i}L_{e_i\ro e_j},
$$
by (\ref{hhhh}), and so we also have
$$
\choi_\phi=\tilde\phi(\id_{K}\ro\id_{K})=\sum_{i,j=1}^n R_{e_j\ro e_i}L_{\phi(e_i\ro e_j)}
=\sum_{i,j=1}^n R_{(e_i\ro e_j)^*}L_{\phi(e_i\ro e_j)},
$$
up to a scalar multiplication.
In the diagram (\ref{tensor-left}), this corresponds to $\sum_{i,j=1}^n e_{ij}\ot\phi(e_{ij})$,
which is the usual Choi matrix of $\phi$.

\begin{remark}\label{rem-finite}
For given $x\in{\calb(K)}$ and $y\in{\calhs(K)}$, we have $xy\in\calhs(K)$ and
$$
xy\ro xy =L_xy\ro L_xy
=L_x(y\ro y)(L_x)^*
=\ad_{L_{x^*}}(y\ro y) = \widetilde{\ad_{x^*}}(y\ro y),
$$
as operators on $H=\calhs(K)$.
The last identity follows from
$$
\ad_{L_{x^*}}(R_y L_z) = L_x R_y L_z L_{x^*} = R_y L_x L_z L_{x^*} = R_y L_{xzx^*} = \widetilde{\ad_{x^*}}(R_y L_z)
$$
for $x,y,z \in {\calb(K)}$.
When $K$ is finite dimensional, we see that
$$
\choi_\phi=\tilde\phi(x_0\ro x_0)
=\tilde\phi\circ\widetilde{\ad_{x^*_0}}(\id_K\ro\id_K)
$$
is the usual Choi matrix of the map $\phi\circ\ad_{x_0^*}:K\to K$ up to a scalar multiplication,
where $\id_K\in\calhs(K)$ is considered as a vector in $H=\calhs(K)$. Because $x_0$ is an invertible operator on $K$,
we see by \cite[Theorem IV.2]{han_kye_Choi_mat} that $\phi\mapsto\choi_\phi$ retains the correspondence
between $k$-superpositivity (respectively $k$-positivity) and Schmidt number $k$ (respectively $k$-block-positivity)
when $K$ is finite dimensional. See \cite{{kye_comp-ten},{kye_lec_note}} for surveys on these topics.
\end{remark}

\section{Schmidt numbers and $k$-positive maps}\label{sec_d}

Throughout this section, we suppose that $\calm=\calb(K)$ is the type ${\rm I}_\infty$ factor acting on
$H=\calhs(K)$ by left multiplications as in the last section.
Following Proposition 1 of \cite{Shirokov_2013}, we denote by $\cals_k$ the norm closed convex cone
of $\calt(H)$ generated by $\omega_{x,x}$ for $x\in H=\calhs(K)$ with $\rank x\le k$, and
we say that a normal positive functional
$\varrho\in\calt(H)$ has {\sl  Schmidt number $k$} if it belongs to $\cals_k\setminus\cals_{k-1}$.
A normal state $\varrho$ is called separable if $\varrho\in\cals_1$.
See \cite{{holevo_sep},{stormer_2008}} for $k=1$.
We note that the notion of Schmidt rank for pure states on an infinite dimensional Hilbert space
was studied extensively in \cite{vLSSWerner}.
It was shown in \cite{eom-kye} that a linear map $\phi:M_m\to M_n$ is $k$-positive if and
only if $\lan \choi_\phi, zz^* \ran\ge 0$ for every column vector $z\in\mathbb C^m\ot\mathbb C^n$
with Schmidt rank $\le k$. We begin with the infinite dimensional analogue.

\begin{theorem}\label{k-pos}
Suppose that $\calb(K)$ is acting on $H=\calhs(K)$ by left multiplications.
For a linear map $\phi\in\calcb^\sigma(\calb(K))$, the following are equivalent:
\begin{enumerate}
\item[{\rm (i)}]
$\phi$ is $k$-positive;
\item[{\rm (ii)}]
$\lan\choi_\phi, \omega_{x,x} \ran \ge 0$ for every $x\in H=\calhs(K)$ with $\rk x\le k$;
\item[{\rm (iii)}]
$\lan x\ro x,\choid_\phi\ran\ge 0$ for every $x\in H=\calhs(K)$ with $\rk x\le k$.
\end{enumerate}
\end{theorem}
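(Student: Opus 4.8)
The plan is to treat condition (iii) as the hub and establish the two equivalences (i)$\,\Leftrightarrow\,$(iii) and (ii)$\,\Leftrightarrow\,$(iii). Throughout I will use the standard reformulation that a normal map $\phi$ is $k$-positive precisely when the operator matrix $[\phi(a_i^*a_j)]_{i,j=1}^k$ is positive in $M_k(\calb(K))$ for every finite family $a_1,\dots,a_k\in\calb(K)$, equivalently when $\sum_{i,j}(\phi(a_i^*a_j)w_j\,|\,w_i)\ge 0$ for all $w_1,\dots,w_k\in K$. Since an arbitrary positive element of $M_k(\calb(K))$ is a finite sum of matrices of the form $[a_i^*a_j]$ sharing the test vectors $w_i$, it suffices to test single families.

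For (i)$\,\Leftrightarrow\,$(iii) I would first carry out the key computation of the pairing. Writing $x=\sum_{i=1}^k\xi_i\ro\eta_i$ for an element of $\rk x\le k$ and using $\lan x\ro x,\choid_\phi\ran=(\tilde\phi(x\ro x)x_0\,|\,x_0)$ together with the identity (\ref{hhhh}), the action $\tilde\phi(R_yL_z)=R_yL_{\phi(z)}$, and the Hilbert--Schmidt inner product, I expect to obtain
$$
\lan x\ro x,\choid_\phi\ran=\sum_{i,j=1}^k\big(\phi(\xi_i\ro\xi_j)(x_0\eta_j)\,\big|\,x_0\eta_i\big).
$$
Fixing the $\xi_i$ and letting the $\eta_i$ vary, the vectors $x_0\eta_i$ range over the dense set $x_0K$ by Proposition \ref{dense-vec}(i), so by continuity of the quadratic form, (iii) is equivalent to the positivity of $[\phi(\xi_i\ro\xi_j)]_{i,j=1}^k$ for all $\xi_1,\dots,\xi_k\in K$. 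One direction is then immediate: the matrix $[\xi_i\ro\xi_j]$ is positive in $M_k(\calb(K))$, so if $\phi$ is $k$-positive then $[\phi(\xi_i\ro\xi_j)]\ge 0$. For the converse I would expand a finite-rank operator as $a_i=\sum_l\zeta_l\ro\xi_i^{(l)}$ with a common orthonormal system $\{\zeta_l\}$, so that $a_i^*a_j=\sum_l\xi_i^{(l)}\ro\xi_j^{(l)}$; summing the positive matrices $[\phi(\xi_i^{(l)}\ro\xi_j^{(l)})]$ over $l$ gives $[\phi(a_i^*a_j)]\ge 0$ for all finite-rank $a_i$, and finally normality of $\phi$ (approximating $a_i$ by $p_na_i$ with finite-rank projections $p_n\nearrow I$) extends this to arbitrary $a_i\in\calb(K)$, yielding $k$-positivity.

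For (ii)$\,\Leftrightarrow\,$(iii) I would invoke the identity $\lan A^{\pr*}E_0A^\pr,\choid_\phi\ran=\lan\choi_\phi,A^\pr E_0A^{\pr*}\ran$ from the proof of Theorem \ref{thm}, specialized to $A^\pr=R_a\in\calm^\pr=R_{\calb(K)}$. Using $R_a(x_0\ro x_0)R_{a^*}=(x_0a)\ro(x_0a)$, this reads
$$
\lan (x_0a^*)\ro(x_0a^*),\choid_\phi\ran=\lan\choi_\phi,\omega_{x_0a,x_0a}\ran,\qquad a\in\calb(K).
$$
Since $\{x_0a:\rk a\le k\}$ is dense in $\{y\in\calhs(K):\rk y\le k\}$ (again from Proposition \ref{dense-vec}(i), applied to both $a$ and $a^*$) and both sides are continuous in $a$, conditions (ii) and (iii) follow from one another.

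The step I expect to be the main obstacle is the passage, in the converse half of (i)$\,\Leftrightarrow\,$(iii), from the rank-one building blocks $\xi_i\ro\xi_j$ --- which is all that the pairing with $\choid_\phi$ sees directly --- to the arbitrary products $a_i^*a_j$ demanded by $k$-positivity. In the matrix setting (Remark \ref{rem-finite}) this is absorbed into the invertibility of $x_0$, but here $x_0$ is merely an injective Hilbert--Schmidt operator with dense range; the bridge must instead be built from the density statements of Proposition \ref{dense-vec} together with the normality of $\phi$, with care that the approximations converge in topologies under which positivity is preserved.
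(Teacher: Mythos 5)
Your argument is correct, and it overlaps with the paper's proof in its computational core while organizing the equivalences differently. The identity $\lan x\ro x,\choid_\phi\ran=\sum_{i,j}(\phi(\xi_i\ro\xi_j)x_0\eta_j\,|\,x_0\eta_i)$ is exactly what the paper derives from (\ref{hhhh}), (\ref{id-1}) and (\ref{id-2}), and the density of $x_0K$ is used in the same way. Where you diverge: the paper proves (i) $\Leftrightarrow$ (ii) by a second, independent computation (decomposing $\omega_{x,x}=\sum_{i,j}\omega_{\eta_j,\eta_i}\cdot\omega_{\xi_i,\xi_j}$ and invoking (\ref{idddd}) to get $\lan\choi_\phi,\omega_{x,x}\ran=((\id_k\ot\phi)(\eta\ro\eta)\xi|\xi)$), and then disposes of (iii) with ``as before''; you instead link (ii) and (iii) directly through the $\calm^\pr$-bimodule identity $\lan A^{\pr*}E_0A^\pr,\choid_\phi\ran=\lan\choi_\phi,A^\pr E_0A^{\pr*}\ran$ specialized to $A^\pr=R_a$, which saves a computation and makes transparent that (ii) and (iii) are two faces of the same pairing. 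The other genuine difference is that you spell out the bridge the paper leaves implicit: the pairing only tests $\id_k\ot\phi$ against rank-one positive elements $[\xi_i\ro\xi_j]$ and vectors from a dense subspace, and one must pass from these to arbitrary positive elements $[a_i^*a_j]$ of $M_k(\calb(K))$. Your route (finite-rank $a_i$ via a common orthonormal system, then $p_na_i\nearrow a_i$ with normality of $\phi$, positivity surviving $\sigma$-weak limits) is sound and is precisely the step that the paper's ``since $\{x_0\eta:\eta\in K\}$ is dense in $K$'' quietly absorbs. Your correctly anticipated that this approximation step, not the pairing computation, is where the care is needed.
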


\begin{proof}
We calculate $\lan \choi_\phi, \omega_{x,x}\ran$ for a given $x=\sum_{i=1}^k \xi_i\ro \eta_i\in H=\calhs(K)$.
We begin with
$$
\begin{aligned}
\lan R_z L_w, \omega_{x,x} \ran
&=(wxz|x)\\
&=\textstyle\sum_{i,j=1}^k(w\xi_i\ro z^* \eta_i|\xi_j\ro \eta_j)\\
&=\textstyle\sum_{i,j=1}^k(w\xi_i|\xi_j)(\eta_j|z^* \eta_i)
=\textstyle\sum_{i,j=1}^k \lan z, \omega_{\eta_j,\eta_i} \ran \lan w, \omega_{\xi_i,\xi_j} \ran,
\end{aligned}
$$
for $z,w\in\calb(K)$, which implies
$\omega_{x,x}=\sum_{i,j=1}^k \omega_{\eta_j,\eta_i}\cdot \omega_{\xi_i,\xi_j}$
with $\omega_{\eta_j,\eta_i}\in\calb(K)_*$ and $\omega_{\xi_i,\xi_j} \in \calb(K)_*$.
Therefore, we have
$$
\begin{aligned}
\lan \choi_\phi, \omega_{x,x} \ran
&=\lan x_0\ro x_0, \tilde\phi_*(\omega_{x,x}) \ran\\
&=\textstyle\sum_{i,j=1}^k \lan x_0\ro x_0, \omega_{\eta_j,\eta_i}\cdot \phi_*(\omega_{\xi_i,\xi_j}) \ran\\
&=\textstyle\sum_{i,j=1}^k \lan x_0\eta_j\ro x_0\eta_i, \phi_*(\omega_{\xi_i,\xi_j}) \ran
=\textstyle\sum_{i,j=1}^k (\phi(x_0\eta_j\ro x_0\eta_i)\xi_i|\xi_j),
\end{aligned}
$$
by (\ref{idddd}). In conclusion, we have
$\lan \choi_\phi, \omega_{x,x} \ran
=((\id_k\ot\phi)(\eta\ro \eta)\xi|\xi)$,
with $\xi=\sum_{i=1}^k e_i\ot\xi_i$ and $\eta=\sum_{i=1}^k e_i\ot x_0\eta_i$ in $\mathbb C^k\ot K$.
Therefore, we conclude that (i) and (ii) are equivalent, since $\{x_0\eta:\eta\in K\}$ is dense in $K$.

We have $x\ro x={\sum_{i,j=1}^k} R_{\eta_j\ro\eta_i}L_{\xi_i\ro\xi_j}$ by (\ref{hhhh}), and so
$\tilde\phi(x\ro x)={\sum_{i,j=1}^k} R_{\eta_j\ro\eta_i}L_{\phi(\xi_i\ro\xi_j)}$. Therefore,
we also have
$$
\begin{aligned}
\lan x\ro x,\choid_\phi\ran
&=\textstyle\sum_{i,j=1}^k ( R_{\eta_j\ro\eta_i}L_{\phi(\xi_i\ro\xi_j)} x_0|x_0)\\
&=\textstyle\sum_{i,j=1}^k (\phi(\xi_i\ro\xi_j)x_0|x_0(\eta_i\ro\eta_j))
=\textstyle\sum_{i,j=1}^k(\phi(\xi_i\ro\xi_j)x_0\eta_j|x_0\eta_i)
\end{aligned}
$$
by (\ref{id-1}) and (\ref{id-2}), and the equivalence (i) $\Longleftrightarrow$ (iii) follows as before.
\end{proof}

See \cite{hlpqs} for another characterization of $k$-positive maps on infinite dimensional algebra $\calb(K)$.
For a subset $S\subset\calb(H)$, we denote by $S_\circ$ the convex cone of all $\varrho\in\calt(H)$ satisfying
$\lan X, \varrho \ran\ge 0$ for every $X\in S$. For a subset $S\subset\calt(H)$, we also denote by
$S^\circ$ the convex cone of all $X\in\calb(H)$ such that $\lan X, \varrho \ran\ge 0$ for every $\varrho\in S$.
For $S\subset\calt(H)$, we know that $(S^\circ)_\circ$ is the smallest closed convex cone containing $S$.
Therefore, we have $\cals_k=(\cala_k^\circ)_\circ$, with the set $\cala_k$ of all $\omega_{x,x}\in\calt(H)$ with
$\rk x\le k$ as an operator on $K$. We denote by $\mathbb P^\sigma_k$ the convex cone of all
$\phi\in\calcb^\sigma(\calb(K))$ which are $k$-positive,
and put $\choi_{\mathbb P^\sigma_k}=\{\choi_\phi:\phi\in \mathbb P^\sigma_k\}$.
Then Theorem \ref{k-pos} tells us that $\choi_{\mathbb P^\sigma_k}\subset \cala_k^\circ$.

\begin{lemma}
$\choi_{\mathbb P^\sigma_k}$ is weak${}^*$ dense in $\cala_k^\circ$.
\end{lemma}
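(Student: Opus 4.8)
Since $\choi_{\mathbb{P}^\sigma_k}\subseteq\cala_k^\circ$ is already recorded (Theorem \ref{k-pos}) and $\cala_k^\circ=\bigcap_{\varrho\in\cala_k}\{X:\lan X,\varrho\ran\ge 0\}$ is an intersection of weak${}^*$-closed half-spaces, hence weak${}^*$-closed, the inclusion $\overline{\choi_{\mathbb{P}^\sigma_k}}^{\,w*}\subseteq\cala_k^\circ$ is automatic. The plan is therefore to prove the reverse inclusion \emph{directly}, by approximation: for each $k$-block-positive $X\in\cala_k^\circ$ I will produce a sequence $\Phi_n\in\mathbb{P}^\sigma_k$ with $\choi_{\Phi_n}\to X$ in the weak${}^*$ topology. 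The engine is finite-dimensional truncation, where the surjectivity that fails globally (Theorem \ref{surj}) is restored. I would first normalize the data: since $x_0$ may be taken positive, diagonalize it as $x_0=\sum_i\lambda_i\,e_i\ro e_i$ with $\lambda_i>0$ in an orthonormal basis $\{e_i\}$ of $K$, let $P_n$ be the projection onto $\mathrm{span}(e_1,\dots,e_n)$, and let $Q_n$ denote the corresponding compression $y\mapsto P_nyP_n$ of $\calhs(K)$, i.e.\ the operator on $H$ that acts as $\bar P_n\ot P_n$ under the identification $H\simeq\bar K\ot K$.

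\textbf{The approximation.} First I would check that $Q_nXQ_n\to X$ weak${}^*$: for $\varrho\in\calt(H)$ one has $\lan Q_nXQ_n,\varrho\ran=\tr(XQ_n\varrho Q_n)$, and $Q_n\varrho Q_n\to\varrho$ in trace norm because $Q_n\uparrow I$ strongly, so $|\tr(XQ_n\varrho Q_n)-\tr(X\varrho)|\le\|X\|\,\|Q_n\varrho Q_n-\varrho\|_1\to 0$. Next, $Q_nXQ_n$ remains $k$-block-positive, since $\lan Q_nXQ_n,\omega_{x,x}\ran=\lan X,\omega_{Q_nx,Q_nx}\ran\ge 0$ and $\rk(P_nxP_n)\le\rk x\le k$. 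Moreover $Q_nXQ_n$ is supported on the finite corner $\calhs(P_nK)$, so it falls under the finite-dimensional theory. Using $(\ref{hhhh})$ to write $x_0\ro x_0=\sum_{i,j}\lambda_i\lambda_j\,R_{e_j\ro e_i}L_{e_i\ro e_j}$ and Remark \ref{rem-finite}, any map of the form $\Phi_n=\psi_n\circ(\,\cdot\,\mapsto P_n\,\cdot\,P_n)$ with $\psi_n:M_n\to M_n$ satisfies $\choi_{\Phi_n}=\sum_{i,j\le n}\lambda_i\lambda_j\,R_{e_j\ro e_i}L_{\psi_n(e_i\ro e_j)}$, which is exactly the usual Choi matrix of $\psi_n\circ\ad_{(P_nx_0P_n)^*}$ up to a scalar. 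Because $P_nx_0P_n$ is invertible on $P_nK$ and the finite-dimensional correspondence \cite[Theorem IV.2]{han_kye_Choi_mat} matches $k$-block-positive matrices with Choi matrices of $k$-positive maps, I can solve for a $k$-positive $\psi_n$ realizing $\choi_{\Phi_n}=Q_nXQ_n$. Finally, $\Phi_n$ lies in $\mathbb{P}^\sigma_k$: it is normal and completely bounded, and it is $k$-positive because the compression is completely positive and the composition of a $k$-positive map with a completely positive map is again $k$-positive. Combining, $\choi_{\Phi_n}=Q_nXQ_n\to X$ weak${}^*$ with $\Phi_n\in\mathbb{P}^\sigma_k$, which is the assertion.

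\textbf{Main obstacle.} The delicate step is the realization $\choi_{\Phi_n}=Q_nXQ_n$ through Remark \ref{rem-finite}: one must correctly absorb the separating-vector twist $\ad_{(P_nx_0P_n)^*}$, verify via $(\ref{hhhh})$ that the left-multiplication form of $\choi_{\Phi_n}$ matches the compressed operator rewritten in the canonical Choi form, and confirm that conjugating a $k$-positive map by the invertible operator $(P_nx_0P_n)^{\pm 1}$ preserves $k$-positivity. Everything else---the weak${}^*$ convergence and the preservation of $k$-block-positivity under compression---is routine. As an alternative packaging, the same realization feeds a bipolar argument: one reduces the claim to the cone identity $(\choi_{\mathbb{P}^\sigma_k})_\circ=\cals_k$, where $\cals_k\subseteq(\choi_{\mathbb{P}^\sigma_k})_\circ$ is immediate from Theorem \ref{k-pos} and the reverse inclusion follows by separating a putative $\varrho\in(\choi_{\mathbb{P}^\sigma_k})_\circ\setminus\cals_k$ with a $k$-block-positive $X$ and then truncating $X$ as above to contradict $\lan\choi_{\Phi_n},\varrho\ran\ge 0$.
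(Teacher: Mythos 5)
Your proposal is essentially the paper's own argument: compress to a finite\--dimensional corner, check that compression preserves membership in $\cala_k^\circ$, invoke the finite\--dimensional correspondence of Remark \ref{rem-finite} and \cite[Theorem IV.2]{han_kye_Choi_mat} to realize the compressed operator as $\choi_{\Phi_n}$ for a $k$-positive $\Phi_n$ factoring through the corner, and pass to the weak${}^*$ limit. The only substantive deviation is your opening normalization ``$x_0$ may be taken positive,'' which is not free of charge: the set $\choi_{\mathbb P^\sigma_k}$ is defined relative to the \emph{fixed} separating and cyclic vector $x_0$, so replacing $x_0$ by a positive one changes the object under discussion unless you add the (true, one\--line) observation that for the polar decomposition $x_0=u|x_0|$ with $u$ unitary one has $\tilde\phi(x_0\ro x_0)=\widetilde{\phi\circ\ad_{u^*}}(|x_0|\ro|x_0|)$, whence $\choi_{\mathbb P^\sigma_k}$ computed from $x_0$ and from $|x_0|$ coincide. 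The paper sidesteps this entirely by working with the asymmetric corner $\calhs(F,x_0F)$ and the vector $\pi_F(x_0)$ for an arbitrary $x_0$, at the cost of the commuting\--diagram verification you also flag as the delicate step. Your explicit proof that $Q_nXQ_n\to X$ weak${}^*$ via trace\--norm convergence of $Q_n\varrho Q_n\to\varrho$ is a welcome elaboration of the paper's terser ``since $\{\phi_F\}$ is uniformly bounded.''
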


\begin{proof}
For a given finite dimensional subspace $F$ of $K$, we put $H_F=\calhs(F,x_0F)$.
We note that the separating and cyclic vector $x_0$ is an  injective operator on $K$, and so
the Hilbert spaces $F$ and $x_0F$ share the same dimension.
We define $\iota_F:H_F\to H$ and $\pi_F:H\to H_F$ by
$$
\iota_F(y):K\to F\xrightarrow{y} x_0F\hookrightarrow K,
\qquad
\pi_F(x): F\hookrightarrow K\xrightarrow{x} K\to x_0F,
$$
for $y\in H_F$ and $x\in H$, with projections $K\to F$ and $K\to x_0F$.
We note that $\calb(x_0F)$ acts on the Hilbert space $H_F$ by left multiplications
with a separating and cyclic vector $\pi_F(x_0)$.
For $X:H\to H$, we also define $X_F:H_F\to H_F$
by $X_F=\pi_F\circ X\circ\iota_F$.

Now, we suppose that $X\in\calb(H)$ belongs to $\cala_k^\circ$, then $X_F$ also belongs to the convex cone
$\{\omega_{x,x}\in\calt(H_F):\rk x\le k\}^\circ$. Indeed, we have
$$
\lan X_F, \omega_{x,x} \ran
= (\pi_F \circ X \circ \iota_F (x) | x)_{H_F}
= (X \circ \iota_F (x) | \iota_F (x))_H
= \lan X, \omega_{\iota_F(x),\iota_F(x)} \ran
\ge 0
$$
for $x \in H_F$ with $\rk x \le k$,
because $\iota_F(x) \in H$ satisfies $\rk\iota_F(x) \le k$.
Therefore, we have
$X_F=\choi_{\psi_F}$ for a $k$-positive map $\psi_F:{\calb(x_0F)\to \calb(x_0F)}$,
by Remark \ref{rem-finite}.
We denote by {$\phi_F:\calb(K)\to \calb(x_0F)\xrightarrow{\psi_F} \calb(x_0F)\hookrightarrow \calb(K)$.
Then one may check
$$
\pi_F \circ \tilde \phi_F(R_y L_x) \circ \iota_F = \tilde \psi_F (\pi_F \circ R_y L_x \circ \iota_F), \qquad x,y \in \calb(K),
$$
by calculating through $2 \times 2$ block matrices,
to see that the following diagram
$$
\begin{CD}
\calb(H) @         >\widetilde{\phi_F}>>      \calb(H) \\
      @V  VV                   @VV  V \\
\calb(H_F)  @         >\widetilde{\psi_F}>>       \calb(H_F)
\end{CD}
$$
commutes, where the vertical arrow is given by $X\mapsto X_F$ which sends
$E_0=x_0\ro x_0$ to $\pi_F(x_0)\ro\pi_F(x_0)$.} Therefore, we see that $\choi_{\phi_F}\in\calb(H)$ coincides on $H_F$ with
the Choi matrix $\choi_{\psi_F}\in \calb(H_F)$ with respect
to the separating and cyclic vector $\pi_F(x_0)\in H_F$.
Now, we consider the net of all finite dimensional subspaces of $H$ with respect to inclusion.
Then we see that $X$ is the weak${}^*$ limit of $\choi_{\phi_F}$ with $\phi_F\in \mathbb P_k^\sigma$,
since $\{\phi_F\}$ is uniformly bounded.
\end{proof}

Therefore, we have $(\choi_{\mathbb P^\sigma_k})_\circ=((\cala_k)^\circ)_\circ=\overline{\cala_k}= \cals_k$,
and get the following:

\begin{theorem}\label{theorem---sch}
Suppose that $\calm=\calb(K)$ is acting on $H=\calhs(K)$ by left multiplications. For a  normal positive
functional $\varrho$ on $\calb(H)$, the following are equivalent:
\begin{enumerate}
\item[{\rm (i)}]
$\varrho$ has Schmidt number not bigger than $k$;
\item[{\rm (ii)}]
$\lan \choi_\phi, \varrho \ran\ge 0$ for every $\phi\in\mathbb P_k^\sigma$.
\end{enumerate}
\end{theorem}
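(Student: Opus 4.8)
The plan is to establish the theorem as a direct consequence of the bipolar-type duality relation that has just been accumulated in the preceding lemma and remarks, by matching the two conditions in the statement against the already-proven identity $(\choi_{\mathbb P^\sigma_k})_\circ = \cals_k$. The statement asserts, for a normal positive functional $\varrho$, that $\varrho$ has Schmidt number $\le k$ if and only if $\lan \choi_\phi, \varrho\ran \ge 0$ for every $k$-positive $\phi \in \mathbb P_k^\sigma$. I would read off condition (i) as membership $\varrho \in \cals_k$ and condition (ii) as membership $\varrho \in (\choi_{\mathbb P_k^\sigma})^\circ$ (using the polarity notation $S_\circ$ for subsets of $\calb(H)$ just introduced), so the whole theorem reduces to the single set equality $\cals_k = (\choi_{\mathbb P_k^\sigma})_\circ$.

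\emph{First} I would record the chain of equalities that the text has essentially handed me: by the definition $\cala_k = \{\omega_{x,x} : \rk x \le k\}$ and the general fact $(S^\circ)_\circ = \overline{\conv}\,S$ for $S \subset \calt(H)$, we have $\cals_k = \overline{\cala_k} = (\cala_k^\circ)_\circ$. \emph{Next} I would invoke Theorem \ref{k-pos}, which gives the inclusion $\choi_{\mathbb P_k^\sigma} \subset \cala_k^\circ$, and the immediately preceding Lemma, which upgrades this to weak${}^*$ density of $\choi_{\mathbb P_k^\sigma}$ in $\cala_k^\circ$. \emph{Then} the crucial observation is that taking the predual polar $(\,\cdot\,)_\circ$ is insensitive to weak${}^*$ closure in $\calb(H)$: since every $\varrho \in \calt(H)$ induces a weak${}^*$ continuous functional on $\calb(H)$, a set and its weak${}^*$ closure have identical polars. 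Hence $(\choi_{\mathbb P_k^\sigma})_\circ = (\cala_k^\circ)_\circ = \cals_k$, which is exactly the desired equivalence.

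\emph{The main obstacle} is conceptually the density step, but that is already discharged by the Lemma, so for this final theorem the only point requiring care is verifying that $(\choi_{\mathbb P_k^\sigma})_\circ$ genuinely equals $(\overline{\choi_{\mathbb P_k^\sigma}}^{w^*})_\circ$. I would justify this by noting that $\varrho \in (\choi_{\mathbb P_k^\sigma})_\circ$ means $\lan X, \varrho \ran \ge 0$ for all $X$ in the set; since $X \mapsto \lan X, \varrho\ran = \tr(X\varrho)$ is weak${}^*$ continuous and the nonnegativity constraint is weak${}^*$ closed, this automatically extends to the weak${}^*$ closure, and the reverse inclusion is trivial since the set sits inside its closure. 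This gives $(\choi_{\mathbb P_k^\sigma})_\circ = (\cala_k^\circ)_\circ = \cals_k$, so that $\varrho \in \cals_k$, i.e.\ $\varrho$ has Schmidt number $\le k$, if and only if $\lan \choi_\phi, \varrho\ran \ge 0$ for every $\phi \in \mathbb P_k^\sigma$, completing the proof.
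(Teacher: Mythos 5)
Your proposal is correct and follows essentially the same route as the paper, which likewise deduces the theorem from the chain $(\choi_{\mathbb P^\sigma_k})_\circ=(\cala_k^\circ)_\circ=\overline{\cala_k}=\cals_k$ using Theorem \ref{k-pos}, the density lemma, and the fact that polars are unchanged under weak${}^*$ closure (a point you justify more explicitly than the paper does). The only blemish is a notational slip where you once write $(\choi_{\mathbb P_k^\sigma})^\circ$ instead of $(\choi_{\mathbb P_k^\sigma})_\circ$ for a subset of $\calb(H)$, but the argument itself uses the correct polar throughout.
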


As an easy corollary, we get the criteria \cite{Shirokov_2013} for Schmidt numbers
in terms of the ampliation of $k$-positive maps,
which is an infinite dimensional analogue of \cite{terhal-schmidt}.
When $k=1$, this is the separability criteria by Horodecki's \cite{horo-1}.
See also \cite{{stormer_2008},{hou_2010}} for infinite dimensional case with $k=1$.

\begin{corollary}\label{th-schmidt}
A normal positive functional $\varrho$ on $\calb(H)$ has Schmidt number not bigger than $k$
if and only if
$\tilde\phi_*(\varrho)$ is positive for every $\phi\in\mathbb P_k^\sigma$.
\end{corollary}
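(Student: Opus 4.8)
The plan is to read this off Theorem \ref{theorem---sch}, which already characterizes Schmidt number $\le k$ by the pairing condition $\lan\choi_\psi,\varrho\ran\ge 0$ for every $\psi\in\mathbb P_k^\sigma$. Since $\calm=\calb(K)$ is of type ${\rm I}$, Theorem \ref{extension} gives $\calcb^\sigma_\e(\calm)=\calcb^\sigma(\calm)$, so $\tilde\phi$ and its predual $\tilde\phi_*$ are available for every $\phi\in\mathbb P_k^\sigma$. The bridge between the two formulations is the elementary identity, valid for a vector $u\in H$ regarded as the bounded operator $u\ro u\in\calb(H)$ and $T\in\calt(H)$,
\[
\lan u\ro u,\,T\ran=(Tu\,|\,u),
\]
which follows from the stated relation $\lan A,x\ro y\ran=(Ax|y)$ and the cyclicity of the trace, together with the identity $\tilde\phi(\choi_\psi)=\choi_{\phi\circ\psi}$ already recorded in the proof of Theorem \ref{horo}.

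For the direction assuming that $\varrho$ has Schmidt number $\le k$, I would fix $\phi\in\mathbb P_k^\sigma$ and, for each $V\in\calm$, consider the completely positive map $\ad_{V^*}:X\mapsto VXV^*$. Composing a $k$-positive map with a completely positive one keeps it $k$-positive, so $\phi\circ\ad_{V^*}\in\mathbb P_k^\sigma$. Using $\choi_{\ad_{V^*}}=Vx_0\ro Vx_0$ from (\ref{choi_ele}), the relation $\tilde\phi(\choi_\psi)=\choi_{\phi\circ\psi}$, and the defining property of the predual map, I would compute
\[
\lan\choi_{\phi\circ\ad_{V^*}},\varrho\ran
=\lan\tilde\phi(Vx_0\ro Vx_0),\varrho\ran
=\lan Vx_0\ro Vx_0,\tilde\phi_*(\varrho)\ran
=(\tilde\phi_*(\varrho)Vx_0\,|\,Vx_0).
\]
By Theorem \ref{theorem---sch} the left-hand side is nonnegative for every $V\in\calm$, hence $(\tilde\phi_*(\varrho)Vx_0\,|\,Vx_0)\ge 0$ for all $V$. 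Because $x_0$ is cyclic for $\calm$, the vectors $\{Vx_0:V\in\calm\}$ are dense in $H$, and $u\mapsto(\tilde\phi_*(\varrho)u\,|\,u)$ is continuous, so the inequality passes to all of $H$ and yields $\tilde\phi_*(\varrho)\ge 0$.

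The converse is immediate. If $\tilde\phi_*(\varrho)$ is positive for every $\phi\in\mathbb P_k^\sigma$, then, since $E_0=x_0\ro x_0$ is positive, the pairing
\[
\lan\choi_\phi,\varrho\ran=\lan\tilde\phi(E_0),\varrho\ran=\lan E_0,\tilde\phi_*(\varrho)\ran
\]
is the pairing of two positive operators and is therefore nonnegative; Theorem \ref{theorem---sch} then gives Schmidt number $\le k$.

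The substance lies entirely in the first direction, and the main obstacle is conceptual rather than computational: one must recognize that positivity of the trace class operator $\tilde\phi_*(\varrho)$ need only be tested on the dense cyclic orbit $\calm x_0$, and that conjugating $\phi$ by the elementary maps $\ad_{V^*}$ keeps the test maps inside $\mathbb P_k^\sigma$. Once the identity $\lan\choi_{\phi\circ\ad_{V^*}},\varrho\ran=(\tilde\phi_*(\varrho)Vx_0\,|\,Vx_0)$ is in hand, everything reduces to Theorem \ref{theorem---sch} followed by a routine density-and-continuity argument, which is why the statement is indeed an easy corollary.
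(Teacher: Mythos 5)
Your proposal is correct and follows essentially the same route as the paper: both establish the identity $\lan \choi_{\phi\circ\ad_{V}}, \varrho\ran = (\tilde\phi_*(\varrho)\,V^{*}x_0\,|\,V^{*}x_0)$ (you use $\ad_{V^*}$, which is immaterial), invoke Theorem \ref{theorem---sch} for the pairing condition, and finish by cyclicity of $x_0$ for $\calb(K)$ together with continuity. The paper's proof is just a terser version of yours, deriving the key identity from Remark \ref{rem-finite} rather than from (\ref{choi_ele}) and $\tilde\phi(\choi_\psi)=\choi_{\phi\circ\psi}$.
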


\begin{proof}
We see that the identity
$$
\lan \choi_{\phi\circ\ad_V}, \varrho \ran
=\lan \tilde \phi \circ \widetilde{\ad_V} (E_0), \varrho \ran
=\lan \tilde \phi \circ \ad_{L_V} (E_0), \varrho \ran
=\lan V^*x_0 \ro V^*x_0, \tilde\phi_*(\varrho) \ran,
$$
holds for every $V\in\calb(K)$ by Remark \ref{rem-finite}.
Because $x_0$ is cyclic for $\calb(K)$, we get the conclusion.
\end{proof}

Recall that every normal positive functional is of the form $\varrho=\sum_{i=1}^\infty\lambda_i\omega_{x_i,x_i}$.
It is clear that $\varrho\in\cals_k$ if $\rk x_i\le k$ for each $i=1,2,\dots$ in this expression,
but such $\varrho$\rq s in $\cals_k$ do not exhaust all normal functionals in $\cals_k$.
See \cite{{holevo_sep},{Shirokov_2013}}.

\section{$k$-superpositive maps}

We say that $\phi\in\calcb^\sigma(\calb(K))$ is {\sl $k$-partially entanglement breaking} \cite{{cw-EB},{Shirokov_2013}}
if $\tilde\phi_*(\varrho)\in\cals_k$
for every normal state $\varrho\in\calt(H)$. Theorem \ref{entbreak} tells us that $\phi$
is $k$-partially entanglement breaking if and only if $\choid_\phi\in\cals_k$, and we see that the condition
$\choid_\phi\in\cals_k$ does not depend on the choice of a separating and cyclic vector.
By the identity $\choid_{\ad_V}=\omega_{Vx_0,Vx_0}$, we see that $\choid_{\ad_V}\in\cals_k$ whenever $\rk V\le k$.
We denote by $\mathbb E_k$ the convex cone of $\calcb^\sigma(\calb(K))$ generated by $\{\ad_V:\rk V\le k\}$,
then we see that every map $\phi\in \mathbb E_k$ is $k$-partially entanglement breaking.

For a given $\phi\in\calcb^\sigma(\calb(K))$, we define $\|\phi\|_\choid=\| \choid_\phi\|_{\calt(H)}$. Then
$\phi\mapsto \|\phi\|_\choid$ is a norm on the space $\calcb^\sigma(\calb(K))$ by Theorem \ref{inj}.
We say that a normal completely positive map from $\calb(K)$ into itself
is {\sl $k$-superpositive} \cite{ssz} when it belongs to
the closure of $\mathbb E_k$ with respect to the norm $\|\cdot\|_\choid$.
We note that the norm $\|\cdot \|_\choid$ itself depends on the choice of a separating and cyclic vector $x_0$,
but the next proposition tells us that the definition of $k$-superpositivity does not depend on
the choice of a separating and cyclic vector.

\begin{proposition}\label{k-super-choi}
A normal completely positive map $\phi:\calb(K)\to\calb(K)$  is
$k$-super-positive if and only if $\choid_\phi\in\cals_k$.
\end{proposition}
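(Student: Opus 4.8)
The plan is to read off the statement from the fact that, by the very definition of $\|\cdot\|_\choid$ together with the injectivity in Theorem \ref{inj}, the assignment $\phi\mapsto\choid_\phi$ is a linear isometry from $(\calcb^\sigma(\calb(K)),\|\cdot\|_\choid)$ into $\calt(H)$. Since $k$-superpositivity of $\phi$ means precisely that $\phi$ lies in the $\|\cdot\|_\choid$-closure of the convex cone $\mathbb E_k$, and since $\|\phi-\psi\|_\choid=\|\choid_\phi-\choid_\psi\|_{\calt(H)}$, this is equivalent to $\choid_\phi$ lying in the $\calt(H)$-norm closure of the image $\choid_{\mathbb E_k}:=\{\choid_\psi:\psi\in\mathbb E_k\}$. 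Thus the whole proposition reduces to the set identity
$$
\overline{\choid_{\mathbb E_k}}^{\,\|\cdot\|_{\calt(H)}}=\cals_k .
$$

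First I would identify $\choid_{\mathbb E_k}$. As $\phi\mapsto\choid_\phi$ is linear and $\mathbb E_k$ is the convex cone generated by $\{\ad_V:\rk V\le k\}$, the image $\choid_{\mathbb E_k}$ is the convex cone generated by $\{\choid_{\ad_V}:\rk V\le k\}=\{\omega_{Vx_0,Vx_0}:\rk V\le k\}$, using the identity $\choid_{\ad_V}=\omega_{Vx_0,Vx_0}$. For the inclusion $\overline{\choid_{\mathbb E_k}}\subseteq\cals_k$, which gives the \emph{only if} direction, I note that $\rk(Vx_0)\le\rk V\le k$, so each generator $\omega_{Vx_0,Vx_0}$ already belongs to $\cals_k$; since $\cals_k$ is a closed convex cone it contains the cone $\choid_{\mathbb E_k}$ and hence its closure.

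The reverse inclusion $\cals_k\subseteq\overline{\choid_{\mathbb E_k}}$, giving the \emph{if} direction, is the one genuine step and rests on Proposition \ref{dense-vec}(ii). The map $x\mapsto\omega_{x,x}=x\ro x$ is norm-continuous from $H$ into $\calt(H)$, since $\|x\ro x-y\ro y\|_{\calt(H)}\le\|x-y\|_H(\|x\|_H+\|y\|_H)$. By Proposition \ref{dense-vec}(ii) the set $\{Vx_0:\rk V\le k\}$ is dense in $\{x\in\calhs(K):\rk x\le k\}$, so this continuity forces every $\omega_{x,x}$ with $\rk x\le k$ to be a $\calt(H)$-norm limit of elements $\omega_{Vx_0,Vx_0}$ with $\rk V\le k$. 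Hence all the generators of $\cals_k$ lie in the closed convex cone $\overline{\choid_{\mathbb E_k}}$, and as $\cals_k$ is by definition the smallest closed convex cone containing those generators, we conclude $\cals_k\subseteq\overline{\choid_{\mathbb E_k}}$. Combining the two inclusions proves the identity, and with it the proposition. I do not expect a serious obstacle: once the isometry remark is in place, everything is concentrated in the density statement of Proposition \ref{dense-vec}(ii), and the only routine care needed is the elementary trace-norm continuity of $x\mapsto x\ro x$ and the standard fact that replacing a generating set by a dense subset leaves the generated closed convex cone unchanged.
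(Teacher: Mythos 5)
Your proof is correct and follows essentially the same route as the paper's: both directions rest on the closedness of $\cals_k$, the identity $\choid_{\ad_V}=\omega_{Vx_0,Vx_0}$, the density statement of Proposition \ref{dense-vec}(ii), and the trace-norm estimate $\|\omega_{x,x}-\omega_{y,y}\|_{\calt(H)}\le(\|x\|+\|y\|)\|x-y\|$. The only difference is presentational: you package the argument as an isometric identification reducing everything to the set identity $\overline{\choid_{\mathbb E_k}}=\cals_k$, whereas the paper runs the equivalent explicit $\e$-approximation.
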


\begin{proof}
If $\phi_i\in\mathbb E_k$ and $\|\phi_i-\phi\|_\choid\to 0$ then
we have $\choid_{\phi_i}\in\cals_k$ and $\|\choid_{\phi_i}-\choid_\phi\|_{\calt(H)}\to 0$, and so we have $\choid_\phi\in\cals_k$.
For the converse, we suppose that $\choid_\phi\in\cals_k$, and $\e>0$ is given. Then we take $x_1,x_2,\dots,x_n\in H=\calhs(K)$
such that $\rk x_i\le k$ and $\|\choid_\phi-\sum_{i=1}^n\omega_{x_i,x_i}\|_{\calt(H)}<\e$.
By Proposition \ref{dense-vec} (ii), we can
take $V_i\in\calb(K)$, $i=1,2,\dots,n$, such that $\rk V_i\le k$ and
$$
\|V_ix_0-x_i\|\le \e/n(2M+\e),\qquad i=1,2,\dots,n,
$$
where $M$ is the maximum of $\{\|x_i\|:i=1,2,\dots,n\}$.

For given $x,y\in H$, we have
$$
\begin{aligned}
\|\omega_{x,x}-\omega_{y,y}\|_{\calt(H)}
&\le \|\omega_{x,x}-\omega_{x,y}\|_{\calt(H)}+\|\omega_{x,y}-\omega_{y,y}\|_{\calt(H)}\\
&\le \|x\|\|x-y\|+\|x-y\|\|y\|
=(\|x\|+\|y\|)\|x-y\|.
\end{aligned}
$$
Therefore, we have
$$
\begin{aligned}
\|\phi-\textstyle\sum_{i=1}^n \ad_{V_i}\|_\choid
&=\|\choid_\phi-\textstyle\sum_{i=1}^n \omega_{V_ix_0,V_ix_0}\|_{\calt(H)}\\
&\le\|\choid_\phi-\textstyle\sum_{i=1}^n\omega_{x_i,x_i}\|_{\calt(H)}
   +\textstyle\sum_{i=1}^n\|\omega_{x_i,x_i}-\omega_{V_ix_0,V_ix_0}\|_{\calt(H)}\\
&<\e  +\textstyle\sum_{i=1}^n (\|x_i\|+\|V_ix_0\|)\|x_i-V_ix_0\|<2\e.
\end{aligned}
$$
Hence, we conclude that $\phi$ belongs to the closure of $\mathbb E_k$, and it is $k$-superpositive.
\end{proof}

The behaviour of $\{\|\phi_n\|_\choid\}$ for a sequence $\{\phi_n\}$ heavily depends on the choice of the separating and cyclic vector $x_0$,
even though the notion of $k$-superpositivity does not depend on the choice of $x_0$.
To see this, we take a separating and cyclic vector
$$
x_0=\sum_{n=1}^\infty n^{-(1+p)/2} e_n\ro e_n\in H=\calhs(K),
$$
with an orthonormal basis $\{e_n\}$ of $K$, which depends on the choice of $p>0$.
We also take $V_n=n^{(1+\e)/2} e_n\ro e_n\in\calb(K)$, and put $\phi_n=\ad_{V_n}$.
Then we have $\choid_{\phi_n}=V_nx_0\ro V_nx_0$, and so we have
$\|\phi_n\|_\choid=\|V_nx_0\|^2_H=n^{\e-p}$. Therefore, $\lim_{n\to\infty}\|\phi_n\|_\choid=0,1$ or $+\infty$ depending on the choice of $p>0$.
We note that $\{\phi_n\}$ is neither uniformly bounded nor converges to zero in the point-weak${}^*$ topology.

Suppose that $\{\phi_n\}$ is uniformly bounded sequence in $\calcb^\sigma(\calm)$ and $\|\phi_n-\phi\|_\choid\to 0$.
Then the identity
\begin{equation}\label{bdjgnkvfv}
\lan A^\pr B C^\pr, \choid_\phi \ran
=( \phi(B) C^\pr x_0|A^{\pr *}x_0),
\qquad A^\pr, C^\pr\in\calm^\pr,\ B\in\calm
\end{equation}
shows that $(\phi_n(B)\xi|\eta)\to (\phi(B)\xi|\eta)$ for $\xi,\eta$ in the dense subset $\calm x_0$ of $H$.
Using the uniform boundedness of $\{\phi_n\}$, we see that $\phi_n(B)\to \phi(B)$ in the weak operator topology,
or equivalently weak${}^*$ topology, for every $B\in\calb(K)$. This proves (i) $\Longrightarrow$ (iii)
of the following theorem for a sequence $\{\phi_n\}$ in $\calcb^\sigma(\calm)$
with an arbitrary factor $\calm$ acting on a separable Hilbert space with a separating and cyclic vector.

\begin{theorem}
Suppose that $\phi_n$ and $\phi$ are normal completely positive maps on $\calb(K)$. Then the following are equivalent:
\begin{enumerate}
\item[{\rm (i)}]
$\{\phi_n\}$ is uniformly bounded and $\|\phi_n-\phi\|_\choid\to 0$;
\item[{\rm (ii)}]
$\{\phi_n\}$ is uniformly bounded, $\|\choid_{\phi_n}\|_{\calt(H)}\to\|\choid_\phi\|_{\calt(H)}$
and $\choid_{\phi_n}\to\choid_\phi$ weakly;
\item[{\rm (iii)}]
$\phi_n\to\phi$ in the point weak${}^*$-topology.
\end{enumerate}
\end{theorem}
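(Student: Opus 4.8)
The plan is to close the cycle of implications (i) $\Rightarrow$ (iii) $\Rightarrow$ (ii) $\Rightarrow$ (i), the first of which is already established in the discussion preceding the statement via the pairing identity (\ref{bdjgnkvfv}). The point to keep in mind throughout is that, since $\phi_n$ and $\phi$ are completely positive, each of $\choid_{\phi_n}$ and $\choid_\phi$ is a \emph{positive} trace class operator by Theorem \ref{thm} (i)$\Longleftrightarrow$(v); consequently $\|\choid_\phi\|_{\calt(H)}=\tr\choid_\phi$, and likewise for each $\phi_n$. Positivity is what drives the whole argument, so the completely positive hypothesis is genuinely used.

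For (iii) $\Longrightarrow$ (ii) I would first recover the uniform boundedness, which is \emph{not} assumed in (iii): testing $\phi_n(I)\to\phi(I)$ in the weak${}^*$ topology against each fixed $T\in\calt(K)$ gives a convergent, hence bounded, scalar sequence, so the Banach--Steinhaus theorem yields $\sup_n\|\phi_n(I)\|<\infty$, and since $\|\phi_n\|_{\rm cb}=\|\phi_n(I)\|$ for completely positive maps, $\{\phi_n\}$ is uniformly bounded. Next, writing $\tr\choid_{\phi_n}=\lan I,\choid_{\phi_n}\ran=(\phi_n(I)x_0|x_0)=\tr(\phi_n(I)\,x_0x_0^*)$ with $x_0x_0^*\in\calt(K)$, the weak${}^*$ convergence of $\phi_n(I)$ gives $\|\choid_{\phi_n}\|_{\calt(H)}\to\|\choid_\phi\|_{\calt(H)}$. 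Finally, (\ref{bdjgnkvfv}) rewrites $\lan A^\pr BC^\pr,\choid_{\phi_n}\ran=(\phi_n(B)C^\pr x_0|A^{\pr*}x_0)=\tr\big(\phi_n(B)\,(C^\pr x_0)(A^{\pr*}x_0)^*\big)$ with $(C^\pr x_0)(A^{\pr*}x_0)^*\in\calt(K)$, so the point weak${}^*$ convergence of $\phi_n$ forces convergence of each such pairing; since by (\ref{hhhh}) the elements $A^\pr BC^\pr$ span the finite rank operators on $H$, every matrix entry of $\choid_{\phi_n}$ converges to that of $\choid_\phi$. Together with the trace convergence and positivity, the truncation estimate described below upgrades this to convergence in the weak topology of $\calt(H)$, completing (ii).

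The analytic core is (ii) $\Longrightarrow$ (i), a Radon--Riesz / Scheff\'e phenomenon for the positive cone of $\calt(H)$: weak (equivalently, entrywise) convergence together with convergence of traces forces trace norm convergence. I would prove it by finite rank truncation. Let $P_N$ be the projection onto $\mathrm{span}(e_1,\dots,e_N)$ for an orthonormal basis of $H$ and $P_N^\perp=I-P_N$. For fixed $N$ the compressions $P_N\choid_{\phi_n}P_N$ are $N\times N$ matrices converging entrywise, hence in trace norm, to $P_N\choid_\phi P_N$. For the remainder, positivity gives $\|P_N^\perp\choid_{\phi_n}P_N^\perp\|_{\calt(H)}=\tr(P_N^\perp\choid_{\phi_n})=\tr\choid_{\phi_n}-\tr(P_N\choid_{\phi_n})$, while the Cauchy--Schwarz inequality for the trace norm bounds each off diagonal block $P_N\choid_{\phi_n}P_N^\perp$ by $\sqrt{\tr\choid_{\phi_n}}\,\sqrt{\tr(P_N^\perp\choid_{\phi_n})}$. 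Since $\tr(P_N\choid_{\phi_n})\to\tr(P_N\choid_\phi)$ (a finite sum of entries) and $\tr\choid_{\phi_n}\to\tr\choid_\phi$, the tail $\tr(P_N^\perp\choid_{\phi_n})$ is uniformly small once $N$ and $n$ are large; choosing $N$ with $\tr(P_N^\perp\choid_\phi)<\e$ and then $n$ large makes $\|\choid_{\phi_n}-\choid_\phi\|_{\calt(H)}$ arbitrarily small. Thus $\|\phi_n-\phi\|_\choid\to0$, and uniform boundedness is inherited from (ii).

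The main obstacle is exactly this last step. The trace class does not enjoy the Radon--Riesz property in general --- the commutative model $f_n=1+\sin(2\pi nx)$ in $L^1[0,1]$ converges weakly with constant norm yet not in norm --- so the implication cannot be purely formal. What rescues it is the atomic structure of $\calt(H)$, which supplies the finite rank projections $P_N$ for truncation, combined with positivity, which enters through $\|P_N^\perp\choid P_N^\perp\|_{\calt(H)}=\tr(P_N^\perp\choid)$ and the Cauchy--Schwarz control of the off diagonal blocks. I would take care to record that every pairing in the argument reduces to $\tr(\phi_n(\cdot)\,T)$ against a fixed trace class $T$ on $K$, which is precisely the mechanism converting the point weak${}^*$ hypothesis into convergence of scalars, and that the same truncation estimate is what legitimizes calling the entrywise limit in (iii)$\Longrightarrow$(ii) a weak limit.
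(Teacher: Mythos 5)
Your argument is correct, and it handles the analytic core of the theorem by a genuinely different route than the paper. The paper disposes of (i) $\Longleftrightarrow$ (ii) by citing Dell'Antonio's theorem that a weakly convergent sequence of normal states converges in norm, and it obtains the weak convergence $\choid_{\phi_n}\to\choid_\phi$ in (iii) $\Longrightarrow$ (ii) by truncating on the $\calb(H)$ side: an arbitrary $X\in\calb(H)$ is split into four blocks with respect to the right-multiplication projections $R_{\pi(F)}$, the corner block lies in ${\rm alg}(\calm^\pr,\calm)$ where the pairing identity (\ref{bdjgnkvfv}) applies, and the remaining blocks are controlled by $\|x_0-x_0\pi(F)\|$ together with the uniform bound. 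You instead prove a single truncation lemma on the trace-class side --- for positive $\varrho_n$, entrywise (equivalently weak) convergence plus convergence of traces forces trace-norm convergence, via $\|P_N^\perp\varrho_nP_N^\perp\|_{\calt(H)}=\tr(P_N^\perp\varrho_n)$ and Cauchy--Schwarz for the off-diagonal blocks --- and you use that one lemma both for (ii) $\Longrightarrow$ (i) and to upgrade the entrywise convergence extracted from (iii). This is self-contained (it reproves the needed positive-cone case of Dell'Antonio rather than citing it) and correctly isolates positivity as the hypothesis that rescues the failure of the Radon--Riesz property in general $L^1$-type spaces; the paper's version is shorter on the page but outsources exactly that point to the literature. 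The shared portions (the cycle of implications with (i) $\Longrightarrow$ (iii) taken from the preceding discussion, uniform boundedness via Banach--Steinhaus and $\|\phi_n\|=\|\phi_n(I)\|$, and the trace identity $\|\choid_\phi\|_{\calt(H)}=(\phi(I)x_0|x_0)$) coincide with the paper's. One small imprecision: the operators $A^\pr BC^\pr$ do not literally span all finite-rank operators on $H$, but by (\ref{hhhh}) they include every rank-one operator $(e_i\ro e_j)\ro(e_k\ro e_l)$ built from the product orthonormal basis of $\calhs(K)$, which is all the entrywise convergence your truncation needs.
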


\begin{proof}
We note that $\choid_{\phi_n}$ and $\choid_\phi$ are normal positive functionals by Theorem \ref{thm}.
It is known \cite{dellantonio} that if a sequence of normal states converges weakly then it converges in norm,
and so we see that  (i) is equivalent to (ii).
If a sequence $\{\phi_n(I)\}$  converges in the weak${}^*$ topology,
then $\{\phi_n(I)\}$ is uniformly bounded by the uniform boundedness principle.
Since $\phi_n$ is completely positive, we have $\|\phi_n\|=\|\phi_n(I)\|$,
and so we see that (iii) implies that $\{\phi_n\}$ is uniformly bounded.
We also have $\|\choid_{\phi_n}\|_{\calt(H)}\to\|\choid_\phi\|_{\calt(H)}$, by the identity
$$
\|\choid_\phi\|_{\calt(H)}
=\tr (\choid_\phi)
=\lan I, \choid_\phi \ran
=(\phi(I)x_0|x_0).
$$
Therefore, it remains to show that (iii) implies that
$\choid_{\phi_n}\to \choid_\phi$ weakly.

Now, we suppose that (iii) holds.
For every finite dimensional subspace $F$ of $K$, we denote by $\pi(F)\in\calb(K)$ the projection onto $F$.
Since $R_{\pi(F)}R_yL_xR_{\pi(F)}=R_{\pi(F)y\pi(F)}L_x$
and $\pi(F)y\pi(F)\in\calb(F)$,
we see that $R_{\pi(F)}XR_{\pi(F)}$ belongs to the algebra generated
by $L_{\calb(K)}$ and its commutant $R_{\calb(K)}$ for every $X\in\calb(H)$. Then the identity (\ref{bdjgnkvfv}) tells us that
$$
\lan R_{\pi(F)}XR_{\pi(F)}, \choid_{\phi_n}-\choid_\phi \ran \to 0,\qquad {\text{\rm as}}\ n\to\infty,
$$
for every $X\in\calb(H)$.
We are going to show $\lan X, \choid_{\phi_n}-\choid_\phi \ran \to 0$ for every $X\in\calb(H)$.
Now, we have
\begin{equation}\label{nvhmbgjj}
\begin{aligned}
X
&=R_{\pi(F)}XR_{\pi(F)}
+R_{\pi(F)}X(I_H-R_{\pi(F)})\\
&\phantom{XXXX}+(I_H-R_{\pi(F)})XR_{\pi(F)}
+ (I-R_{\pi(F)})X(I_H-R_{\pi(F)}).
\end{aligned}
\end{equation}
As for the second term of (\ref{nvhmbgjj}), the pairing $\lan R_{\pi(F)}X(I_H-R_{\pi(F)}), \choid_{\phi_n}-\choid_\phi \ran$
with $\choid_{\phi_n}-\choid_\phi$ is equal to
$$
\lan R_{\pi(F)}(\tilde\phi_n(X)-\tilde\phi(X))(I_H-R_{\pi(F)}), E_0 \ran
=\lan \tilde\phi_n(X)-\tilde\phi(X), (I_H-R_{\pi(F)})E_0 R_{\pi(F)} \ran
$$
by the $R_{\calb(K)}$-bimodule property of $\tilde\phi$. Putting $M=\sup\|\phi_n\|$, we have
$$
|\lan R_{\pi(F)}X(I_H-R_{\pi(F)}), \choid_{\phi_n}-\choid_\phi \ran|
\le 2M\|x_0\|\|x_0-x_0\pi(F)\|\|X\|,
$$
which becomes arbitrarily small if we take $F$ with the sufficiently large dimension.
The third and fourth terms in (\ref{nvhmbgjj}) can be controlled in the exactly same way. Therefore, we conclude that
$\lan X, \choid_{\phi_n}-\choid_\phi \ran$ tends to zero for every $X\in\calb(H)$.
\end{proof}

It would be nice to know whether every $k$-superpositive map is the limit of a
\lq uniformly bounded\rq\ sequence in $\mathbb E_k$
with respect to $\|\ \|_\choid$.
We note that the condition $\choid_\phi\in\cals_k$ in Proposition \ref{k-super-choi} holds
if and only if $\lan \choi_\psi, \choid_\phi \ran \ge 0$ for every $\psi\in\mathbb P_k^\sigma$ by
Theorem \ref{theorem---sch},
and several equivalent conditions for this have been considered in
Theorem \ref{horo}.
We also see that the condition $\choid_\phi\in\cals_k$ is equivalent to say that $\phi$ is $k$-partially
entanglement breaking by Theorem \ref{entbreak}.
We summarize as follows:

\begin{theorem}\label{k-superpos}
For a given normal completely positive map $\phi:\calb(K)\to\calb(K)$, the following are equivalent:
\begin{enumerate}
\item[{\rm (i)}]
$\phi$ is $k$-superpositive;
\item[{\rm (ii)}]
$\lan \choi_\psi, \choid_\phi \ran \ge 0$ for every $\psi\in\mathbb P_k^\sigma$;
\item[{\rm (iii)}]
{$\phi\circ\psi$} is completely positive for every $\psi\in\mathbb P_k^\sigma$;
\item[{\rm (iv)}]
{$\tilde\phi(\choi_\psi)$} is positive for every $\psi\in\mathbb P_k^\sigma$;
\item[{\rm (v)}]
{$\tilde\psi_*(\choid_\phi)$} is positive for every $\psi\in\mathbb P_k^\sigma$;
\item[{\rm (vi)}]
$\choid_\phi\in \cals_k$;
\item[{\rm (vii)}]
$\tilde\phi_*(\omega_{x,x})\in\cals_k$ for every $x\in H$;
\item[{\rm (viii)}]
$\phi$ is $k$-partially entanglement breaking.
\end{enumerate}
\end{theorem}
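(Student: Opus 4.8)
The plan is to reduce all eight conditions to the single condition (vi), namely $\choid_\phi\in\cals_k$, by assembling four results already proved. Since $\phi$ is normal and completely positive and $\calm=\calb(K)$ is of type ${\rm I}$, Theorem \ref{extension} gives $\calcb^\sigma_\e(\calb(K))=\calcb^\sigma(\calb(K))$, so that $\phi$ and every $\psi\in\mathbb P_k^\sigma$ lie in $\calcb^\sigma_\e$ and the earlier theorems apply verbatim. The equivalence (i) $\Longleftrightarrow$ (vi) is then exactly Proposition \ref{k-super-choi}, and Theorem \ref{thm} guarantees that $\choid_\phi$ is a normal positive functional, a fact I shall use repeatedly below.

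To obtain the block (vi) $\Longleftrightarrow$ (vii) $\Longleftrightarrow$ (viii) I would apply Theorem \ref{entbreak} with $S=\cals_k$. The two hypotheses of that theorem must be checked: $\cals_k$ is norm closed by its definition, and it is invariant under $\varrho\mapsto V^\pr\varrho V^{\pr*}$ for $V^\pr\in\calm^\pr=R_{\calb(K)}$. Writing $V^\pr=R_w$ and using $R_w\,\omega_{x,x}\,R_w^*=\omega_{xw,xw}$ together with $\rk(xw)\le\rk x\le k$, the generating functionals of $\cals_k$ are preserved, and norm continuity of $\varrho\mapsto V^\pr\varrho V^{\pr*}$ extends the invariance to the whole norm-closed convex cone. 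Condition (iii) of Theorem \ref{entbreak}, that $\tilde\phi_*$ carries $\calt(H)^+$ into $\cals_k$, is precisely the definition of $k$-partial entanglement breaking, which yields (viii).

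For the block (ii) $\Longleftrightarrow$ (iii) $\Longleftrightarrow$ (iv) $\Longleftrightarrow$ (v) I would invoke Theorem \ref{horo} with the convex cone $S=\mathbb P_k^\sigma$. The single hypothesis to confirm is that $\psi\circ\ad_V\in\mathbb P_k^\sigma$ whenever $\psi$ is $k$-positive and $V\in\calb(K)$; this holds because $\ad_V$ is completely positive and precomposing a $k$-positive map with a completely positive map remains $k$-positive. The four equivalent statements of Theorem \ref{horo} then read off directly as (ii) through (v). Finally I would join the two chains through Theorem \ref{theorem---sch}: since $\choid_\phi$ is a normal positive functional, that theorem applied with $\varrho=\choid_\phi$ says that $\choid_\phi\in\cals_k$ if and only if $\lan\choi_\psi,\choid_\phi\ran\ge 0$ for every $\psi\in\mathbb P_k^\sigma$, which is exactly the bridge (vi) $\Longleftrightarrow$ (ii) linking the two blocks.

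No genuine analytic difficulty arises here, since every implication is a direct citation and the entire content is bookkeeping around condition (vi). The only steps demanding care are the two hypothesis verifications above, the invariance of $\cals_k$ under the commutant action $R_w$ and the stability of $\mathbb P_k^\sigma$ under $\psi\mapsto\psi\circ\ad_V$, and both collapse to the elementary rank inequality $\rk(xw)\le\rk x$ and to the complete positivity of $\ad_V$, respectively.
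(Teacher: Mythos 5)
Your proposal is correct and follows essentially the same route as the paper, which likewise assembles the theorem from Proposition \ref{k-super-choi} for (i) $\Leftrightarrow$ (vi), Theorem \ref{theorem---sch} for (vi) $\Leftrightarrow$ (ii), Theorem \ref{horo} for (ii)--(v), and Theorem \ref{entbreak} for (vi)--(viii). Your explicit verification of the hypotheses of Theorems \ref{entbreak} and \ref{horo} (invariance of $\cals_k$ under the commutant action and stability of $\mathbb P_k^\sigma$ under $\psi\mapsto\psi\circ\ad_V$) is a welcome addition that the paper leaves implicit.
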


For the infinite dimensional case,
$k$-partially entanglement breaking maps with $k=1$ have been studied intensively
in \cite{holevo_sep} under the name of entanglement breaking maps,
for which the equivalence condition (iii) was given in \cite{he_2013}.
For general $k=1,2,\dots$, it was shown in \cite{Shirokov_2013} that
there exists a single pure state $\varrho$ such that $\id\ot\phi(\varrho)\in\cals_k$ implies that
$\phi$ is $k$-partially entanglement breaking. This is related to the condition (vi).

In the case of matrix algebras, a linear map whose Choi matrix is
separable has been called superpositive by Ando \cite{ando-04}, as
it was considered in the condition (vi) with $k=1$. On the other
hand, it was shown that conditions (i), (iii), (vi) and (viii) are
equivalent when $k=1$ in \cite{hsrus}, and maps satisfying those
condition were called entanglement breaking. It was also shown in
\cite{cw-EB} that the conditions (i), (iii), (vi) and (viii) are
equivalent for $k=1,2,\dots$. The equivalent condition like (ii)
using the bilinear pairing between maps was found in \cite{ssz},
where the authors call such maps $k$-superpositive maps.

In case of the $n\times n$ matrix algebra, there is a line segment in the
convex cone of all positive maps on which $k$-superpositivity and $k$-positivity are
distinguished for $k=1,2,\dots, n$ \cite{{choi72},{tomiyama-83},{tom_85},{terhal-schmidt}}.
See also \cite[Section 1.6, 1.7]{kye_lec_note}. Such maps can be extended on $\calb (K)$ with infinite dimensional $K$,
retaining $k$-superpositivity and $k$-positivity. It would be nice to find a line segment
in the mapping space $\calcb^\sigma(\calb(K))$ on which
$k$-superpositive maps and $k$-positive maps appear for every $k=1,2,\dots$.

\end{document}